\newcommand{\m}{\mathfrak m}
\newtheorem{notation}{Notation}[section]
\newtheorem{defn0}{Definition}[section]
\newtheorem{prop0}[defn0]{Proposition}
\newtheorem{thm0}[defn0]{Theorem}
\newtheorem{lemma0}[defn0]{Lemma}
\newtheorem{corollary0}[defn0]{Corollary}
\newtheorem{example0}[defn0]{Example}
\newtheorem{remark0}[defn0]{Remark}
\newtheorem{conjecture0}[defn0]{Conjecture}
\DeclareFontFamily{U}{MnSymbolC}{}
\DeclareSymbolFont{MnSyC}{U}{MnSymbolC}{m}{n}
\DeclareFontShape{U}{MnSymbolC}{m}{n}{
    <-6>  MnSymbolC5
   <6-7>  MnSymbolC6
   <7-8>  MnSymbolC7
   <8-9>  MnSymbolC8
   <9-10> MnSymbolC9
  <10-12> MnSymbolC10
  <12->   MnSymbolC12}{}
\DeclareMathSymbol{\aprod}{\mathbin}{MnSyC}{'270}
\newenvironment{definition}{ \begin{defn0}}{\end{defn0}}
\newenvironment{proposition}{\bigskip \begin{prop0}}{\end{prop0}}
\newenvironment{theorem}{\bigskip \begin{thm0}}{\end{thm0}}
\newenvironment{lemma}{\bigskip \begin{lemma0}}{\end{lemma0}}
\newenvironment{corollary}{\bigskip \begin{corollary0}}{\end{corollary0}}
\newenvironment{example}{ \begin{example0}\rm}{\end{example0}}
\newenvironment{remark}{ \begin{remark0}\rm}{\end{remark0}}
\newtheorem{alg}{Algorithm}
\newcommand{\K}{\mathbb K}
\title{A refinement on the local cactus rank algorithm}
\author{Alessandra Bernardi, Oriol Reig Fité}
\address{Universit\`a di Trento, Via Sommarive, 14 - 38123 Povo (Trento), Italy}
\email{alessandra.bernardi@unitn.it, oriol.reigfite@unitn.com}
\subjclass[2020]{14N07, 13H10}
\keywords{Symmetric tensors, Tensor decomposition, Local cactus rank, Gorenstein algebras,  Algorithms.}
\begin{document}
\begin{abstract} We present an algorithm to  recover a minimal local apolar scheme to a homogeneous polynomial $F$.
The scheme’s socle degree determines whether it is evinced by a Generalized Additive Decomposition (GAD) of $F$ or of an extension.
We give constructive procedures for both cases and compute the Hilbert function efficiently via Hankel operators.
\end{abstract}
\maketitle
\section*{Introduction}

The study of ranks of homogeneous forms has a long history, starting from Sylvester’s classical theorem on binary forms (cf. eg. \cite{BERNARDI201134, Bernard}), and continuing through the modern theory of secant varieties and tensor ranks.  
Among these notions, the \emph{cactus rank} of a form, defined as the minimal length of a zero-dimensional apolar scheme (cf. \cite{IK, BR13}), plays a key role in algebraic geometry and its applications (cf. eg. \cite{MR3729273}).  
When the minimal apolar scheme to a given polynomial $F$ is local, its structure captures subtle information about the form and the possible additive decompositions of either $F$ itself or an extension of $F$.

A classical way to describe such forms is via \emph{Generalized Additive Decompositions} (GADs). The connection between local Gorenstein schemes and GADs has been explored in works such as \cite{IK,MR3250539,GAD, BJPR}, where criteria are given to decide when a local scheme is evinced by a GAD.  
In these settings, the Macaulay correspondence between Gorenstein algebras and divided power series is a standard tool, though the presentation is often given in the standard polynomial ring and the divided power formalism is introduced only when necessary.

Recent algorithmic approaches to cactus rank and GADs have focused on practical computation of apolar schemes \cite{Alessandra, GAD}.  
However, most of these algorithms are formulated in the standard polynomial setting, and the explicit transition to divided powers is typically performed only in the final stages.  
This choice, while standard in the literature, may introduce additional technicalities when performing coordinate changes, homogenizations, or computations involving higher socle degrees.

\medskip
\noindent
\textbf{Our contribution.}  
In this paper, we develop a complete algorithm to determine the local cactus rank of a form $F$ \emph{entirely in the divided power setting}, to recover its minimal local apolar scheme, and to decide whether the scheme is evinced by either a GAD of $F$ or of an extension of $F$.  
The algorithm is constructive: it computes the support of the minimal scheme from multiplication operators, extracts the Hilbert function and its symmetric decomposition, and recovers the GAD when possible.  
Our presentation is self-contained in the divided power setting, so that all coordinate changes and dual actions are explicit.

\medskip
\noindent
\textbf{Structure of the paper.}  
\Cref{sec:prelim} recalls the necessary background on divided powers, the dual space, Gorenstein algebras, and Hilbert functions, setting the notation used throughout the paper.  
Although these notions are classical, we present them in detail because the algorithms are implemented entirely in the divided power context, whereas the standard exposition is usually given in the polynomial ring.   
An expert reader may consult it mainly to verify the notation. 

\Cref{sec:main} recalls the background on apolarity, local Gorenstein algebras, and Generalized Additive Decompositions (GAD), and contains the main results. It includes the description of the local cactus rank algorithm, and the recovery of the local GAD in the two cases where the socle degree of the minimal local apolar scheme is either smaller or bigger than the degree of $F$, and
the computation of the Hilbert function via Hankel operators  with several examples and computational remarks.

Our main contributions are:
\begin{itemize}
    \item An explicit algorithm for computing local cactus rank and minimal local apolar schemes directly in the divided power setting.
    \item A constructive method to determine whether a minimal local apolar scheme is evinced by a GAD of a polynomial $F$, and to recover the decomposition when it exists and generalize it to an extension of $F$.
    \item A unified treatment of background material and algorithmic procedures in the divided power framework, avoiding repeated transitions from standard polynomials.
\end{itemize}

\section*{Acknowledgements}
We are grateful to Jarosław Buczyński, Maciej Gałązka, Joachim Jelisiejew, Daniele Taufer, and Bernard Mourrain for insightful conversations that greatly benefited this work.

\medskip

This work has been supported by European Union’s HORIZON–MSCA-2023-DN-JD programme under the Horizon Europe (HORIZON) Marie Skłodowska-Curie Actions, grant agreement 101120296 (TENORS).

\section{Preliminaries and Background}\label{sec:prelim}
In this section we revisit some foundational concepts and fix the notation used throughout the paper. Our aim is to provide a self-contained introduction to Gorenstein algebras and Hilbert functions in the divided power setting, making these essential tools accessible even to non-expert readers.


\medskip

    Let $\mathbb{K}$ be an algebraically closed field of characteristic zero, and let $V$ be a $\mathbb{K}$-vector space of dimension $n+1$. Denote by $S := 
    \mathbb{K}[x_0, \ldots, x_n]$ the symmetric algebra of $V$, identified with the homogeneous coordinate ring of projective space $\mathbb{P}^n$. Let also $R := \mathbb{K}[x_1, \ldots, x_n]$ be the coordinate ring of the standard affine chart $\{x_0 \neq 0\} \subset \mathbb{P}^n$.

\medskip

Denote by $S_d$ (respectively $R_d$) the space of homogeneous polynomials of degree $d$ in $S$ (respectively in $R$). Similarly, $S_{\leq d}$ and $R_{\leq d}$ denote polynomials of degree at most $d$. The dual space of a vector space $V$ is denoted by $V^*:=\mathrm{Hom}_\mathbb{K}(V, \mathbb{K})$.

\medskip

For multi-indices $\alpha = (\alpha_1, \ldots, \alpha_n)$ and $\beta = (\beta_1, \ldots, \beta_n)$ in $\mathbb{N}^n$, we use standard multi-index notations:
\[
\binom{d}{\alpha} := \frac{d!}{\alpha!(d - |\alpha|)!}, \quad \text{where} \quad \alpha! = \prod_{i=1}^n \alpha_i!, \quad |\alpha| = \sum_{i=1}^n \alpha_i,
\]

and similarly for $\binom{\alpha}{\beta}:= \frac{\alpha!}{\beta!(\alpha - \beta)!}$. 

Given $\zeta \in \mathbb{K}^n$, the evaluation at $\zeta$ is the functional $\mathds{1}_\zeta : R \to \mathbb{K}$, defined as $\mathds{1}_\zeta(p) = p(\zeta)$.

Finally, we abbreviate products of the form $(x_0 - \zeta_0)^{\alpha_0} \cdots (x_n - \zeta_n)^{\alpha_n}$ as $(x - \zeta)^\alpha$.

\subsection{Divided powers and the dual space}
The dual space $S^* := \mathrm{Hom}_{\mathbb{K}}(S, \mathbb{K})$ can be identified with the power series ring in the basis of divided powers. Consider the monomial basis ${x^\alpha}$ of $S$ and its dual basis ${Y^{(\alpha)}}$ in $S^*$, defined by $Y^{(\alpha)}(x^\beta)=\delta(\alpha, \beta)$. This yields an isomorphism
of $ \K $-vector spaces:
\[
\Psi: S^* \longrightarrow \K[[Y_0, \ldots, Y_n]], \qquad
\Lambda \mapsto \sum_{\alpha \in \mathbb{N}^{n+1}} \Lambda(x^\alpha)\, Y^{(\alpha)}.
\]
An affine translation 
\begin{equation}\label{affine:tranlsation}
\varphi: R \to R, \qquad \varphi(x_i) = x_i + \zeta_i
\end{equation}
induces a dual map $\varphi^*: R^* \to R^*$ acting as differential operators:
\begin{equation}\label{dual:affine:tranlsation}
    \varphi^*: R^* \longrightarrow R^*, \qquad \varphi^*(\Lambda)(p) := \Lambda(\varphi(p)).
\end{equation}

In particular,
$\varphi^*(1)=\sum_{\alpha\in \mathbb N^n} \varphi^*(1)(x^\alpha) Y^{(\alpha)}=\sum_{\alpha\in \mathbb N^n} \zeta^\alpha Y^{\alpha}= \mathds{1}_\zeta$ where $ \mathds{1}_\zeta $ is the evaluation at the point $ \zeta $.

\begin{lemma}\label{lemma:DualChangeBasis}
Let $ \K $ be a field of characteristic zero. Let $ \zeta \in \K^n $ and $ \varphi$ be the affine change of coordinates of \eqref{affine:tranlsation}. Then, for every $ \alpha \in \mathbb{N}^n $,
\[
\varphi^*\left(Y^{(\alpha)}\right) = \frac{1}{\alpha!} \mathds{1}_\zeta \circ \frac{\partial^\alpha}{\partial x^\alpha}.
\]
\end{lemma}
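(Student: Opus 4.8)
The plan is to verify the identity by evaluating both sides on the monomial basis $\{x^\beta\}_{\beta \in \mathbb{N}^n}$ of $R$, since both $\varphi^*(Y^{(\alpha)})$ and $\frac{1}{\alpha!}\,\mathds{1}_\zeta \circ \frac{\partial^\alpha}{\partial x^\alpha}$ are $\K$-linear functionals on $R$ and are therefore determined by their values on this basis.

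First I would compute the left-hand side. By the definition of the dual map in \eqref{dual:affine:tranlsation},
\[
\varphi^*\!\left(Y^{(\alpha)}\right)(x^\beta) = Y^{(\alpha)}\!\left(\varphi(x^\beta)\right) = Y^{(\alpha)}\!\left((x+\zeta)^\beta\right),
\]
where $(x+\zeta)^\beta = \prod_{i=1}^n (x_i+\zeta_i)^{\beta_i}$. Expanding each factor by the binomial theorem gives $(x+\zeta)^\beta = \sum_{\gamma \leq \beta} \binom{\beta}{\gamma}\zeta^{\beta-\gamma} x^\gamma$, so, since $Y^{(\alpha)}$ extracts the coefficient of $x^\alpha$, one obtains $\varphi^*(Y^{(\alpha)})(x^\beta) = \binom{\beta}{\alpha}\zeta^{\beta-\alpha}$, with the convention that this is $0$ unless $\alpha \leq \beta$ componentwise.

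Next I would compute the right-hand side on the same basis. We have $\frac{\partial^\alpha}{\partial x^\alpha}(x^\beta) = \frac{\beta!}{(\beta-\alpha)!}\,x^{\beta-\alpha}$ when $\alpha \leq \beta$ and $0$ otherwise; applying $\mathds{1}_\zeta$ and dividing by $\alpha!$ yields $\frac{1}{\alpha!}\cdot\frac{\beta!}{(\beta-\alpha)!}\,\zeta^{\beta-\alpha} = \binom{\beta}{\alpha}\zeta^{\beta-\alpha}$ (again zero unless $\alpha \leq \beta$), where the characteristic-zero hypothesis ensures $\alpha!$ is invertible in $\K$. The two expressions coincide for every $\beta$, which proves the lemma. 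Equivalently, the same computation can be packaged as Taylor's formula: $\varphi^*(Y^{(\alpha)})(p)$ is the coefficient of $x^\alpha$ in the shifted polynomial $p(x+\zeta)$, and the Taylor expansion of $p$ around $\zeta$ identifies this coefficient with $\frac{1}{\alpha!}\frac{\partial^\alpha p}{\partial x^\alpha}(\zeta)$.

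There is no substantial obstacle here; the only points requiring care are the bookkeeping of multi-index factorials, the convention that binomial coefficients $\binom{\beta}{\alpha}$ with $\alpha \not\leq \beta$ vanish, and the use of $\operatorname{char}\K = 0$ to guarantee that $\alpha!$ is a unit.
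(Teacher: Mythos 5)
Your proposal is correct and follows essentially the same route as the paper's proof: evaluate both functionals on the monomial basis $x^\beta$, expand $(x+\zeta)^\beta$ by the binomial theorem, extract the coefficient of $x^\alpha$, and match it with $\frac{1}{\alpha!}\frac{\beta!}{(\beta-\alpha)!}\zeta^{\beta-\alpha}$. The Taylor-formula remark and the explicit note on $\operatorname{char}\K=0$ are harmless additions but do not change the argument.
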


\begin{proof}
Due to linearity, we evaluate the expression on $x^\beta$:
\[
\varphi^*\left(Y^{(\alpha)}\right)\left(x^\beta\right) = Y^{(\alpha)}\left((x + \zeta)^\beta\right).
\]
By the binomial theorem, $(x + \zeta)^\beta = \sum_{k \le \beta} \binom{\beta}{k} x^k \zeta^{\beta - k}$. Applying $Y^{(\alpha)}$ extracts the coefficient of $x^\alpha$, which is:
\[
\prod_{i=1}^n \binom{\beta_i}{\alpha_i} \zeta_i^{\beta_i - \alpha_i} \quad \text{for } \alpha \leq \beta, \text{ and } 0 \text{ otherwise}.
\]
This is equivalent to $\frac{1}{\alpha!} \frac{\beta!}{(\beta - \alpha)!} \zeta^{\beta - \alpha}$ (when $\alpha \le \beta$), precisely matching $\left(\frac{1}{\alpha!} \mathds{1}_\zeta \circ \frac{\partial^\alpha}{\partial x^\alpha}\right)\left(x^\beta\right)$.
\end{proof}

From this, we deduce:
\[
\left( \mathds{1}_\zeta \circ \frac{\partial^\alpha}{\partial x^\alpha} \right) \left( \frac{1}{\alpha!} (x - \zeta)^\beta \right) =
\begin{cases}
1 & \text{if } \alpha = \beta, \\
0 & \text{otherwise}.
\end{cases}
\]
This implies that $\mathcal{B}_\zeta := \left\{ \frac{1}{\alpha!}(x - \zeta)^\alpha \right\}_{\alpha \in \mathbb{N}^n}$ forms a basis for $R$. Its dual basis is given by $ \{\partial^\alpha_\zeta\} \subset R^* $, where $ \partial^\alpha_\zeta(p) := \left( \mathds{1}_\zeta \circ \frac{\partial^\alpha}{\partial x^\alpha} \right)(p) $, and $\partial^{\alpha}_{\zeta}$ acts naturally as $\left(\mathds{1}_{\zeta} \circ x^{\alpha}(\partial)\right)(p)$. This establishes an isomorphism $S^* \to \mathbb{K}[[\partial_{1, \zeta},\ldots,\partial_{n, \zeta}]]$ mapping $\Lambda \mapsto \sum_{\alpha\in \mathbb N^{n+1}} \partial^{\alpha}_{\zeta}(x^{\alpha}) \partial^{\alpha}_{\zeta}$. Note that this demonstrates the dual change of basis from $\left\{Y^{(\alpha)}\right\}$ to $\mathcal{B}_\zeta$'s dual basis (see \Cref{lemma:DualChangeBasis}). This formalism also extends to positive characteristic fields $\mathbb{K}$ by replacing $\mathcal{B}_\zeta$ with $\left\{(x-\zeta)^\alpha\right\}_{\alpha \in \mathbb{N}^n}$.

\subsubsection{Comultiplication and divided powers}

The diagonal map $ v \mapsto (v,v) $ on $ V $ induces a comultiplication on $ S $, defined by:
\[
\Delta(x_i) = x_i \otimes 1 + 1 \otimes x_i, \qquad
\Delta(x^\alpha) = \sum_{\beta + \gamma = \alpha} \binom{\alpha}{\beta} x^\beta \otimes x^\gamma.
\]
The multiplication on $ S^* $ is defined by:
\[
(\Lambda_1 \cdot \Lambda_2)(p) := (\Lambda_1 \otimes \Lambda_2)(\Delta(p)).
\]
In this setting:
\[
Y^{(\alpha)} \cdot Y^{(\beta)} = \binom{\alpha + \beta}{\alpha} Y^{(\alpha + \beta)}, \qquad
Y_0^{\alpha_0} \cdots Y_n^{\alpha_n} = \alpha! Y^{(\alpha)}.
\]
One also defines, for any $ F \in S^* $ and $ k \geq 1 $,
$F^{(k)} := \frac{1}{k!} F^k$.

The ring $\mathbb{K}_{dp}[Y_0,\ldots,Y_n]$ is called the \emph{divided power ring}, and it is isomorphic to a polynomial ring in characteristic zero via $Y^{(\alpha)} \mapsto \frac{1}{\alpha!} Z^\alpha$.





\subsubsection{Grading and homogeneous coordinate changes}

The decomposition $ S = \bigoplus_{d \geq 0} S_d $ induces a grading on the divided power ring, where:
\[
\mathrm{Hom}_{\mathbb{K}}(S_d, \mathbb{K})* \cong \K_{dp}[Y_0,\ldots,Y_n]_d, \qquad
\K_{dp}[Y_0,\ldots,Y_n] = \bigoplus_{d \geq 0} \mathrm{Hom}_{\mathbb{K}}(S_d, \mathbb{K}).
\]
Note that $S^*=\prod_d \mathrm{Hom}_{\mathbb{K}}(S_d, \mathbb{K})$. Now take $ \zeta = (\zeta_1, \ldots, \zeta_n) \in \K^n $, and define a homogeneous change of coordinates:

\begin{equation}\label{change:of:coordinates}
\varphi(x_0) = x_0, \qquad \varphi(x_i) = x_i + \zeta_i x_0 \quad \text{for } i = 1, \ldots, n.
\end{equation}

The dual map $ \varphi^*: S^* \to S^* $ preserves degrees, and acts on each graded piece $ S_d^* \cong \K_{dp}[Y_0,\ldots,Y_n]_d $.

\begin{example}\label{example: DualChangeCoord}
Let $ n = 1 $, $ \zeta = 2 $. Then the map $ \varphi: \K[x_0,x_1]_2 \to \K[x_0,x_1]_2 $ has matrix:
\[
\begin{bmatrix}
1 & 2 & 4 \\
0 & 1 & 4 \\
0 & 0 & 1
\end{bmatrix}
\]
in the monomial basis. Its transpose gives the matrix of $ \varphi^*: \K_{dp}[Y_0,Y_1]_2 \to \K_{dp}[Y_0,Y_1]_2 $.
\end{example}


Furthermore, due to the functoriality of the symmetric algebra, a homogeneous change of coordinates $\varphi$ in $S$ induces an algebra homomorphism 
\begin{equation}\label{prop: DualChangeIsAlgebraHom}
    \varphi^*: \mathbb{K}_{dp}[Y_0,\ldots,Y_n] \to \mathbb{K}_{dp}[Y_0,\ldots,Y_n]
    \end{equation}
on the divided power algebra (cf. \cite[Chap. III, \S11.5]{Bourbaki}). In particular, we can use the multiplicity of changes of coordinates to avoid computing the matrices in \Cref{example: DualChangeCoord}.
\begin{proposition}\label{prop: ChangeCoordInDivPowers}
    Let $\zeta=(\zeta_1,\ldots, \zeta_n)\in \mathbb K^n$ be an affine point, $\varphi:S\to S$ the homogeneous change of coordinated sending $[1:0:\ldots :0]$ to $[1:\zeta_1:\ldots : \zeta_n]$. Let $\Psi$ be the algebra ismomorphism converting divided powers expressions into polynomials. Then the algebra homomorphism $\Tilde{\varphi}:S\to S$ induced by $x_0\mapsto x_0+\zeta_1x_1+\ldots + \zeta_n x_n$, $x_i\mapsto x_i$ makes the following diagram commute:

\[\begin{tikzcd}
	{K_{dp}[Y_0,\ldots, Y_n]} & {K_{dp}[Y_0,\ldots, Y_n]} \\
	S & S
	\arrow["{\varphi^*}", from=1-1, to=1-2]
	\arrow["\Psi", from=1-1, to=2-1]
	\arrow["\Psi", from=1-2, to=2-2]
	\arrow["{\Tilde{\varphi}}", from=2-1, to=2-2]
\end{tikzcd}\]    
\end{proposition}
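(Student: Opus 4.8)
The plan is to use that every arrow in the square is a $\mathbb{K}$-algebra homomorphism, so that it suffices to check the identity $\Psi\circ\varphi^* = \tilde\varphi\circ\Psi$ on the algebra generators $Y_0,\ldots,Y_n$ of the divided power ring. First I would record why each map is a morphism of algebras: $\tilde\varphi$ is an algebra homomorphism by construction; $\Psi$ is, by its very definition $Y^{(\alpha)}\mapsto\frac1{\alpha!}x^\alpha$ together with $Y_0^{\alpha_0}\cdots Y_n^{\alpha_n}=\alpha! Y^{(\alpha)}$, the $\mathbb{K}$-algebra isomorphism determined by $Y_i\mapsto x_i$; and $\varphi^*$ on $\mathbb{K}_{dp}[Y_0,\ldots,Y_n]$ is an algebra homomorphism by \eqref{prop: DualChangeIsAlgebraHom} (functoriality of the symmetric algebra and its comultiplication). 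Since a $\mathbb{K}$-algebra map out of $\mathbb{K}_{dp}[Y_0,\ldots,Y_n]$ is determined by the images of the $Y_i$, this reduction is legitimate.

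Next I would compute the degree-one part of $\varphi^*$, which is all that is needed. From the defining relation $\varphi^*(\Lambda)(p)=\Lambda(\varphi(p))$ (cf. \eqref{dual:affine:tranlsation}) we have $\varphi^*(Y_j)(x_i)=Y_j(\varphi(x_i))$, and with $\varphi$ as in \eqref{change:of:coordinates}, i.e. $\varphi(x_0)=x_0$ and $\varphi(x_i)=x_i+\zeta_i x_0$, one reads off $\varphi^*(Y_j)(x_0)=\delta_{j,0}$ and $\varphi^*(Y_j)(x_i)=\delta_{j,i}+\zeta_i\delta_{j,0}$ for $i\geq 1$. Identifying a degree-one functional $\Lambda$ with $\sum_k\Lambda(x_k)\,Y_k\in\mathbb{K}_{dp}[Y_0,\ldots,Y_n]_1$, this gives $\varphi^*(Y_0)=Y_0+\zeta_1 Y_1+\cdots+\zeta_n Y_n$ and $\varphi^*(Y_i)=Y_i$ for $i=1,\ldots,n$. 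Equivalently, $\varphi^*|_{S_1^*}$ is the transpose of $\varphi|_{S_1}$, in line with \Cref{example: DualChangeCoord}.

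Finally I would apply $\Psi$ and compare. Since $\Psi(Y_k)=x_k$, we get $\Psi(\varphi^*(Y_0))=x_0+\zeta_1 x_1+\cdots+\zeta_n x_n=\tilde\varphi(x_0)=\tilde\varphi(\Psi(Y_0))$ and $\Psi(\varphi^*(Y_i))=x_i=\tilde\varphi(x_i)=\tilde\varphi(\Psi(Y_i))$ for $i\geq 1$. Hence $\Psi\circ\varphi^*$ and $\tilde\varphi\circ\Psi$ are algebra homomorphisms agreeing on generators, so they coincide and the square commutes. I do not expect a genuine obstacle here; the only delicate point is the duality bookkeeping — ensuring that $\varphi^*(Y_0)$ absorbs the $\zeta_i$ while $\varphi^*(Y_i)$ does not — together with the appeal to \eqref{prop: DualChangeIsAlgebraHom}, which is what licenses checking commutativity only on the generators.
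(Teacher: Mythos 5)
Your proposal is correct and follows essentially the same route as the paper's proof: both reduce to the multiplicativity of $\varphi^*$ on the divided power algebra (via \eqref{prop: DualChangeIsAlgebraHom}) together with the characteristic-zero identity $Y^{(\beta)}=\frac{1}{\beta!}Y^\beta$, so that it suffices to see $\varphi^*(Y_0)=Y_0+\zeta_1Y_1+\cdots+\zeta_nY_n$ and $\varphi^*(Y_i)=Y_i$, and then apply $\Psi$ and compare with $\tilde\varphi$. The only difference is presentational: you make the degree-one dual computation explicit, whereas the paper asserts it directly and expands $\varphi^*(Y^{(\beta)})$ in one string of equalities.
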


\subsection{Macaulay correspondence and Gorenstein algebras}
Define the contraction action of $S$ on $S^*$ as the dual operation to multiplication:
\[
(p \aprod \Lambda)(q) = \Lambda(pq), \quad p,q \in S,\; \Lambda \in S^*.
\]

With the choice of coordinates and dual basis introduced above, one can verify that on monomials:

\begin{equation}\label{eq: ContractionOnMonomials}
x^{\alpha} \aprod Y^{(\beta)} =
\begin{cases}
Y^{(\beta - \alpha)} & \text{if } \alpha_i \leq \beta_i \text{ for all } i, \\
0 & \text{otherwise}
\end{cases}.
\end{equation}

Thus, on polynomials in $\K_{dp}[Y_0, \ldots, Y_n]$, contraction acts as (scaled) partial derivation. In fact, contraction and derivation induce two isomorphic $S$-module structures on $\K_{dp}[Y_0, \ldots, Y_n]$ (see \cite[Proposition 3.13]{Joachim}).

\bigskip

We also consider the contraction action of $R = \K[x_1, \ldots, x_n]$ on $R^*$. As seen in \eqref{eq: ContractionOnMonomials}, contraction of two monomials lowers the degree. This is not necessarily true for infinite series in $\K_{dp}[[Y_1, \ldots, Y_n]]$. For example, for any $p \in R$:
\[
p \aprod \mathds{1}_{\zeta} = p(\zeta)\mathds{1}_\zeta,
\]
since for any $q \in R$,
$
(p \aprod \mathds{1}_\zeta)(q) = \mathds{1}_\zeta(pq) = \mathds{1}_\zeta(p) \cdot \mathds{1}_\zeta(q) = p(\zeta) \mathds{1}_\zeta(q).
$

\begin{remark}\label{remark: contrVsorth}
    For all $p \in S$ and $\Lambda \in S^*$, if $p \aprod \Lambda = 0$, then $\Lambda(p) = 0$. Indeed,
    \[
    \Lambda(p) = \Lambda(p \cdot 1) = (p \aprod \Lambda)(1) = 0.
    \]
    Clearly the same holds for $\Lambda \in R^*$ and $p \in R$. However, the converse does not hold: for instance, $Y^{(3)}(x^2) = 0$ (since $Y^{(3)}$ is the dual of $x^3$), but $x^2 \aprod Y^{(3)} = Y$.
\end{remark}

\begin{definition}
    Let $\Lambda \in S^*$, where $S^*$ is viewed as an $S$-module via contraction. We define the \emph{annihilator} of $\Lambda$ as
    \[
    \operatorname{Ann}(\Lambda) := \{ p \in S \mid p \aprod \Lambda = 0 \}.
    \]
    An ideal $I \subseteq S$ is said to be \emph{apolar} to a homogeneous polynomial $F \in \K_{dp}[Y_0, \ldots, Y_n]_d$ if $I \subseteq \operatorname{Ann}(F)$. A zero-dimensional scheme $Z$ is apolar to $F$ if $I(Z)$ is apolar to $F$.
\end{definition}

Note that the definition of contraction is basis-independent, so $\operatorname{Ann}(\Lambda)$ does not depend on the specific representation of $\Lambda$.

The classical Apolarity lemma can be stated as follows.
\begin{lemma}[Apolarity Lemma]\label{apolarity:lemma}
  An ideal $I\subset S$ is apolar to   $F\in S_d^*$ if and only if
%
    $I_d\subseteq \{p\in S_d\; | \; p\aprod F =0\}$.
\end{lemma}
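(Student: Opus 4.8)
The plan is to prove the two implications separately, the forward one being immediate from the definition and the converse requiring a degree-reasoning argument using the grading of $S$. First I would observe that a homogeneous ideal $I\subseteq S$ is apolar to $F\in S_d^*$ precisely when $p\aprod F=0$ for every $p\in I$; so the content of the statement is that it suffices to check this on the single graded piece $I_d$. The forward direction is trivial: if $I\subseteq\operatorname{Ann}(F)$, then in particular every $p\in I_d$ satisfies $p\aprod F=0$, and since $p\aprod F\in S^*_{\,0}=\K$ is then zero it agrees with the condition $p\aprod F=0$ stated for degree-$d$ elements; equivalently $I_d\subseteq\{p\in S_d\mid p\aprod F=0\}$.

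For the converse, assume $I_d\subseteq\{p\in S_d\mid p\aprod F=0\}$ and let $p\in I$ be arbitrary. Because $I$ is homogeneous I may assume $p$ is homogeneous of some degree $e$; I would then split into cases according to how $e$ compares with $d$. If $e>d$, then by \eqref{eq: ContractionOnMonomials} contraction of a degree-$e$ element against an element of $S_d^*$ is zero for degree reasons, so $p\aprod F=0$ automatically. If $e=d$, this is exactly the hypothesis. The remaining case $e<d$ is the crux: here I want to show $p\aprod F=0$ knowing only that the degree-$d$ part of $I$ kills $F$. The key step is to pass from $p$ of low degree to the degree-$d$ elements $x^\gamma p$ with $|\gamma|=d-e$, all of which lie in $I_d$ by homogeneity, hence annihilate $F$: $x^\gamma\aprod(p\aprod F)=(x^\gamma p)\aprod F=0$ for all such $\gamma$. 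Now $p\aprod F\in S^*_{\,d-e}$, and I claim that an element $\Lambda\in S^*_{\,d-e}$ with $x^\gamma\aprod\Lambda=0$ for all monomials $x^\gamma$ of degree $d-e$ must be zero. Indeed, writing $\Lambda=\sum_{|\beta|=d-e}c_\beta Y^{(\beta)}$ in the divided power basis, equation \eqref{eq: ContractionOnMonomials} gives $x^\beta\aprod\Lambda=c_\beta Y^{(0)}=c_\beta$, so each coefficient $c_\beta$ vanishes and $\Lambda=0$. Therefore $p\aprod F=0$, completing the converse.

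The main obstacle — really the only non-formal point — is this last observation that a graded functional killed by all monomials of the complementary degree is zero; everything else is bookkeeping with the grading and the contraction formula \eqref{eq: ContractionOnMonomials}. It is worth noting that the hypothesis is stated using $S_d^*$ identified with $\K_{dp}[Y_0,\dots,Y_n]_d$, so one should phrase the argument so that the contraction action on divided powers is the one being used; this is harmless since, as remarked after \eqref{eq: ContractionOnMonomials}, the contraction $S$-module structure on the divided power ring is the relevant one and is basis-independent. One small care point: in the case $e<d$ one uses that $I$ being an ideal (not merely a graded subspace) is what lets us multiply $p$ by arbitrary monomials and stay inside $I$; this is exactly where the ideal hypothesis, as opposed to just "graded subspace," is used.
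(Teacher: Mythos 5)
Your proof is correct, and since the paper gives no proof of this lemma (it is quoted as classical), what you wrote is precisely the standard argument it relies on: the forward direction is immediate, and for the converse you multiply a homogeneous $p\in I_e$ with $e<d$ by all monomials of degree $d-e$ to land in $I_d$, then use \eqref{eq: ContractionOnMonomials} to see that a functional in $S^*_{d-e}$ contracted to zero by every monomial of degree $d-e$ must vanish, while elements of degree $>d$ annihilate $F$ for degree reasons. One point worth keeping explicit: the statement as printed says only ``an ideal $I\subset S$'', but homogeneity is genuinely needed (for instance $I=(x_0-1)$ and $F=Y_0^{(2)}$ give $I\cap S_2=0$ yet $I\not\subseteq\operatorname{Ann}(F)$), so your assumption that $I$ is homogeneous is not a loss of generality but a necessary reading, consistent with the paper's use of the lemma for ideals $I(Z)$ of schemes.
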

After having introduced the standard dehomogenization isomorphism $\pi: S_d\cong R_{\leq d}$ and its dual defined by: $(\pi^{-1})^*:(S_d)^*\to (R_{\leq d})^*$, s.t.  $  f^*(p):=F(\pi^{-1}(p))$,  for $f^*=F(Y_0=1)$,
we can also state
an affine version of the apolarity criterion. 

\begin{lemma}[Affine Apolarity Criterion]\label{affine:apolarity:criterion}
For $F\in S_d^*$ homogeneous and $f=F(Y_0=1)$, we have:
\begin{enumerate}[label=\roman*),leftmargin=*]
\item   A zero-dimensional scheme $Z\subseteq \mathbb{P}^n$ defined locally by $\operatorname{Spec}(R/I)$ is apolar to $F$ if and only if $f(p)=0$ for every $p\in I$ with degree $\leq d$.
\item Any $\Lambda\in R^*$ extending $f$ defines a scheme $Z=\operatorname{Spec}(R/\ker(H_\Lambda))$ apolar to $F$.
\end{enumerate}
\end{lemma}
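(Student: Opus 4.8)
The plan is to deduce both parts from the homogeneous Apolarity Lemma (\Cref{apolarity:lemma}) together with the dehomogenization isomorphism $\pi \colon S_d \xrightarrow{\sim} R_{\leq d}$ and its dual. First I would record the compatibility between the contraction action on $S^*$ and the evaluation pairing: by \Cref{remark: contrVsorth}, for $p\in S_d$ we have $p\aprod F = 0$ as an element of $S_0^*\cong\K$ precisely when $F(p)=0$, so the set $\{p\in S_d \mid p\aprod F=0\}$ appearing in \Cref{apolarity:lemma} coincides with $\{p\in S_d \mid F(p)=0\}=\ker(F|_{S_d})$. Then I would translate this through $\pi$: for a homogeneous ideal $I\subseteq S$ cutting out $Z$, the degree-$d$ piece $I_d$ maps under $\pi$ to a subspace of $R_{\leq d}$, and $\pi$ is built precisely so that $F$ vanishes on $p\in S_d$ iff $f = F(Y_0=1)$ vanishes on $\pi(p)\in R_{\leq d}$, using the defining relation $f^*(p)=F(\pi^{-1}(p))$. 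The local description $Z=\operatorname{Spec}(R/I)$ means $I$ here is the dehomogenization of $I(Z)$, so $\pi(I(Z)_d)$ is exactly $\{p\in I \mid \deg p \leq d\}$ once we account for the fact that dehomogenizing a degree-$d$ homogeneous polynomial can drop the degree; combining these identifications with \Cref{apolarity:lemma} gives part i).

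For part ii), given $\Lambda\in R^*$ extending $f$, I would first recall that the Hankel operator $H_\Lambda\colon R\to R^*$ is the map $p\mapsto p\aprod\Lambda$, so $\ker(H_\Lambda)=\operatorname{Ann}(\Lambda)$ is an ideal of $R$; moreover $R/\operatorname{Ann}(\Lambda)$ is finite-dimensional (this is the standard finiteness for Hankel operators / Gorenstein quotients, which the paper's setup grants), hence $Z=\operatorname{Spec}(R/\ker H_\Lambda)$ is a zero-dimensional scheme. It then remains to check $Z$ is apolar to $F$, which by part i) amounts to showing $f(p)=0$ for every $p\in\ker(H_\Lambda)$ of degree $\leq d$. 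But if $p\aprod\Lambda=0$ then in particular $\Lambda(p)=(p\aprod\Lambda)(1)=0$ by \Cref{remark: contrVsorth}, and since $\Lambda$ extends $f$ and $\deg p\leq d$ we get $f(p)=\Lambda(p)=0$. This closes part ii).

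The main obstacle I anticipate is purely bookkeeping around the dehomogenization map: one must be careful that $\pi\colon S_d\to R_{\leq d}$ sends the degree-$d$ part of a homogeneous ideal $I(Z)$ onto the truncation in degree $\leq d$ of its dehomogenization, and that the dual identification $f=F(Y_0=1)$ is exactly the one making $F(p)=f(\pi(p))$ hold on the nose (including normalization of the divided-power coefficients), rather than up to a scalar. A secondary point requiring a line of justification is the finiteness of $R/\ker(H_\Lambda)$ in part ii) — i.e. that any $\Lambda\in R^*$, not necessarily of finite "degree", still has an annihilator of finite colength — which follows because $\Lambda$ extends the finitely-supported functional $f$ and the relevant multiplication operators are governed by the finite-rank Hankel structure; I would cite the apolarity/Hankel discussion recalled above rather than reprove it.
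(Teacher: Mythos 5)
Your proof is correct and follows essentially the same route as the paper: part i) combines the homogeneous Apolarity Lemma with the agreement of contraction and the duality pairing in degree $d$ and the dehomogenization $\pi$, and part ii) is deduced from i) exactly as in the paper, via $p \aprod \Lambda = 0 \Rightarrow \Lambda(p) = 0 = f(p)$ for $\deg p \le d$. The only caveat is your side claim that $R/\ker(H_\Lambda)$ is automatically finite-dimensional because $\Lambda$ extends $f$: this is false in general (a generic extension of $f$ can have trivial annihilator, so the quotient need not be Artinian), but that finiteness is neither needed for the apolarity verification nor asserted in the paper's proof.
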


\begin{proof}
\begin{enumerate}[label=\roman*),leftmargin=*]
    \item\label{rmk: affineApolarity} \leavevmode
  By \Cref{apolarity:lemma}, $I(Z)\subseteq \operatorname{Ann}(F)$ if and only if $I(Z)_d\subseteq \operatorname{Ann}(F)$. Since contraction and the duality pairing agree on $S_d$, this is equivalent to say that $F(I(Z)_d)=0$, i.e. for every $p\in I\subseteq R$ of degree $\leq d$, $F(\pi^{-1}(p))=0$. This is to say that $(\pi^{-1})^*(F)(p)=0$ for every $p\in I$ of degree $\leq d$, or equivalently $F(Y_0=1)(p)=0$ for every $p\in I$ of degree $\leq d$.
\item\label{remark:extensionsAreApolar} If $Z$ is locally defined by $\ker(H_\Lambda)$ then by what we just proved in the previous item, $I(Z)\subseteq \operatorname{Ann}(F)$ if and only if $f(p)=0$ for all $p\in \ker(H_\Lambda)\subseteq R$ with $p\in R_{\leq d}$. And the last condition holds since $p\in \ker(H_\Lambda)$ implies $\Lambda(p)=0$ and $\Lambda$ extends $f$.
\end{enumerate}
\end{proof}

We further generalize this result explicitly for schemes supported at arbitrary points:

\begin{lemma}
Let $F\in S_d^*$ homogeneous and $Z$ a local Gorenstein scheme supported at $[1:\zeta_1:\dots:\zeta_n]$ with $I(Z)\subseteq \operatorname{Ann}(F)$. Then there exists $\Lambda=\mathds{1}_\zeta\circ H(\partial)$, $H\in R$, such that $Z$ is defined by $\operatorname{Ann}(\Lambda)$ and the form $\Lambda$ coincides with $f$ in degree $\leq d$. In particular, $f$ is the truncation of the infinite series $\Lambda$ in degree $d$. 
\end{lemma}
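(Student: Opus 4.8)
The plan is to move to the local Artinian Gorenstein algebra attached to $Z$, produce a dual generator via Macaulay duality, read off its shape, and then correct it by a unit so that it restricts to $f$ in degrees $\le d$. Concretely, I would dehomogenize $I(Z)$ on the chart $\{x_0\neq 0\}$ to get an ideal $I\subseteq R$ with $\m_\zeta^{\,s+1}\subseteq I$, where $\m_\zeta=(x_1-\zeta_1,\dots,x_n-\zeta_n)$ and $s$ is the socle degree of the local Artinian Gorenstein algebra $A:=R/I$; then $Z$ coincides with $\operatorname{Spec}A$ as a subscheme of $\mathbb P^n$. Since $A$ is Gorenstein, the inverse system $I^{\perp}:=\{\Lambda\in R^*\mid \Lambda(I)=0\}$, with its contraction $R$-module structure, is cyclic and isomorphic to $\operatorname{Hom}_\K(A,\K)$, so it admits a generator $\Lambda_0$ with $\operatorname{Ann}_R(\Lambda_0)=I$.

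From $\m_\zeta^{\,s+1}\subseteq\operatorname{Ann}_R(\Lambda_0)$ together with the translated form of \eqref{eq: ContractionOnMonomials} — contraction by $x_i-\zeta_i$ lowers the $i$-th exponent in the basis $\mathcal B_\zeta$ — the functional $\Lambda_0$ is a finite $\K$-combination of the $\partial^\alpha_\zeta$ with $|\alpha|\le s$; writing $H_0\in R_{\le s}$ for the corresponding polynomial via \Cref{lemma:DualChangeBasis}, this is exactly $\Lambda_0=\mathds 1_\zeta\circ H_0(\partial)$, and the same shape persists for $u\aprod\Lambda_0$ with $u\in R$. For the matching with $f$: by \Cref{affine:apolarity:criterion} the functional $f$ vanishes on $I\cap R_{\le d}$, hence descends to $W:=(R_{\le d}+I)/I\subseteq A$, and so does every element of $I^{\perp}$ by \Cref{remark: contrVsorth}. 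The restriction map $I^{\perp}\cong A^*\twoheadrightarrow W^*$ is dual to the inclusion $W\hookrightarrow A$, hence surjective, so $f$ lifts to some $\Lambda\in I^{\perp}$ with $\Lambda|_{R_{\le d}}=f$; equivalently $f$ is exactly the degree-$\le d$ truncation of the series $\Psi(\Lambda)$, which is an infinite series when $\zeta\neq 0$. It remains to pick the lift inside the open locus of generators of $I^{\perp}$, i.e. with $\operatorname{Ann}_R(\Lambda)=I$: the lifts of $f$ form a coset of $W^{\perp}$ in $A^*$, while the non-generators of $A^*$ are precisely the functionals killed by the one-dimensional socle of $A$, i.e. a single hyperplane $\sigma^{\perp}$, so a good lift fails to exist only when this whole coset lies in $\sigma^{\perp}$ — which happens exactly when $\sigma\in W$ and $f$ annihilates a degree-$\le d$ representative of the socle generator $\bar\sigma$.

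The main obstacle is ruling out that degenerate case. If $s\le d$ then $\bar\sigma$ automatically lies in $W$, so one must show $f(\sigma)\neq 0$; I would argue by contradiction: if $f(\sigma)=0$ then, using $\bar b\,\bar\sigma=b(\zeta)\,\bar\sigma$ in $A$ for every $b\in R$, one checks via \Cref{affine:apolarity:criterion} that the strictly shorter local scheme $\operatorname{Spec}(A/\sigma A)$ is again apolar to $F$, which is impossible in the intended setting where $Z$ is a minimal local apolar scheme. When $s>d$, the socle class generically does not lie in $W$ and the lift can be chosen freely, the same length-reduction argument handling the exceptional subcase. Everything else is bookkeeping with Macaulay duality, \Cref{apolarity:lemma}, and the change-of-basis formula of \Cref{lemma:DualChangeBasis}, so I expect essentially all the real content to be concentrated in this non-degeneracy step.
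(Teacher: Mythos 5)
Your argument is correct in substance but takes a genuinely different route from the paper's. The paper's proof translates the support to the origin, invokes the proof of \cite[Proposition~4]{BJPR} to get a polynomial dual generator $g$ agreeing with the translated $f$ up to degree $d$, and then undoes the coordinate change via \Cref{lemma:DualChangeBasis}; you instead reprove that core ingredient directly at $\zeta$ by Macaulay duality: every element of the inverse system $I^\perp\cong\operatorname{Hom}_\K(A,\K)$ kills $\m_\zeta^{\,s+1}$ and hence has the shape $\mathds{1}_\zeta\circ H(\partial)$; the lifts of $f$ form a coset of $W^\perp$ with $W=(R_{\le d}+I)/I$; and a lift is a dual generator exactly when it does not kill the one-dimensional socle. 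Your computations along the way (the characterization of generators of $A^*$ via the socle, the surjectivity of $A^*\to W^*$, the reduction of the obstruction to ``$\sigma\in W$ and $f$ vanishes on a degree-$\le d$ representative of $\sigma$'') are all sound. What your route buys is a self-contained proof with no coordinate shuffle and no external citation, and it makes visible exactly where the real content sits; the paper's proof is shorter but hides that point inside the reference.

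The one caveat is your non-degeneracy step, which you close by appealing to minimality of $Z$ (``the intended setting''), a hypothesis absent from the statement. This is not a defect of your argument so much as of the statement: as literally written the lemma is false. Take $F=Y_0^{(d)}$, so $f=1$ (i.e.\ $f(x^\beta)=\delta_{\beta,0}$ on $R_{\le d}$), and let $Z$ be the length-two local Gorenstein scheme at $[1:0:\dots:0]$ with ideal $(x_1^2,x_2,\dots,x_n)$; then $I(Z)\subseteq\operatorname{Ann}(F)$, but any $\Lambda$ with $\operatorname{Ann}(\Lambda)=(x_1^2,x_2,\dots,x_n)$ must satisfy $\Lambda(x_1)\neq 0$, whereas $f(x_1)=0$, so no $\Lambda$ as in the statement can exist. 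The paper's proof imports the missing hypothesis silently, since \cite[Proposition~4]{BJPR} concerns \emph{minimal} local apolar schemes, while your length-reduction argument (passing to $\operatorname{Spec}(A/\sigma A)$, which is again local and apolar by \Cref{affine:apolarity:criterion}) uses it explicitly. So your proof establishes the corrected statement; just state the minimality hypothesis (or an equivalent non-degeneracy assumption on $f$ against the socle) rather than leaving it implicit, and note that the $s\le d$ versus $s>d$ case split is not really needed, since the same argument covers both.
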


\begin{proof}
    Let $\phi$ be a change of coordinates in $S$ that maps the point $[1 : \zeta_1 : \dots : \zeta_n]$ to the origin $[1 : 0 : \dots : 0]$. This transformation is induced by the linear map on coordinates $x_0 \mapsto x_0$ and $x_i \mapsto x_i - \zeta_i x_0$ for $i=1, \dots, n$.

This projective transformation $\phi$ induces an affine change of coordinates $\phi'$ on the space of polynomials $R_{\le d}$ (of degree at most $d$), given by $x_i \mapsto x_i - \zeta_i$.

Under these transformations:
\begin{itemize}
    \item The form $F \in S_d^*$ is transformed into $\tilde{F} = \phi^*(F) \in S_d^*$.
    \item The associated affine form $f^*$ is transformed into $\tilde{f}^* = (\pi^{-1})^* \circ \phi^*(F) = (\phi')^* \circ (\pi^{-1})^*(F) \in R_{\le d}$.
    \item The support $Z$ of the ideal is now located at the origin $[1 : 0 : \dots : 0]$.
\end{itemize}

With $Z$ supported at the origin, the hypotheses of \cite[Proof of Proposition 4]{BJPR} are now satisfied. Therefore, the ideal $I(Z)$ is locally defined by $\text{Ann}(g)$, where $g \in K_d[Y_1, \dots, Y_n] \subseteq R^*$ is a polynomial that defines the same form as $\tilde{f}^*$ in $R_{\le d}$.

Now, we reverse the change of coordinates. According to the change of basis \Cref{lemma:DualChangeBasis}, the ideal $I(Z)$ is supported at its original point $[1 : \zeta_1 : \dots : \zeta_n]$. Locally, it is defined by $\text{Ann}(\Lambda)$, where $\Lambda$ is of the form $1_\zeta \circ H(\partial)$ for some polynomial $H \in R$ such that $\Lambda$ extends the form $f$.

Finally, we note that the homogenization of $\text{ker}(H_\Lambda)$ is apolar to $F$ by \Cref{rmk: affineApolarity}.
\end{proof}

\subsubsection{Annihilators of infinite series}

Since evaluation maps $\mathds{1}_\zeta$ are defined only on the affine space, we restrict our discussion to the affine coordinate ring $R$ and its dual $R^*$.

\begin{remark}
    For every polynomial $f \in \K_{dp}[Y_1, \ldots, Y_n]$, one has
    \[
    (x_1, \ldots, x_n)^{\deg f + 1} \subseteq \operatorname{Ann}(f) \subseteq (x_1, \ldots, x_n),
    \]
    hence $R/\operatorname{Ann}(f)$ is a local ring supported at $0$. This need not hold for annihilators of infinite series. For instance, let $\zeta_1, \zeta_2 \in \K^n$ and define $\Lambda := \mathds{1}_{\zeta_1} + \mathds{1}_{\zeta_2} \in R^*$. Then $\mathcal{V}(\operatorname{Ann}(\Lambda)) = \{\zeta_1, \zeta_2\}$, where $\mathcal{V}(I)$ denotes the variety defined by the ideal $I$.

    Indeed, if $p$ vanishes at both $\zeta_1$ and $\zeta_2$, then for all $q \in R$:
    \[
    (p \aprod \Lambda)(q) = \mathds{1}_{\zeta_1}(p)\mathds{1}_{\zeta_1}(q) + \mathds{1}_{\zeta_2}(p)\mathds{1}_{\zeta_2}(q) = 0,
    \]
    so $p \in \operatorname{Ann}(\Lambda)$. Conversely, let $p \in \operatorname{Ann}(\Lambda)$ and let $q \in R$ such that $q(\zeta_1) \neq 0$, $q(\zeta_2) = 0$, then
    \[
    0 = (p \aprod \Lambda)(q) = \mathds{1}_{\zeta_1}(p) \cdot \mathds{1}_{\zeta_1}(q),
    \]
    forcing $p(\zeta_1) = 0$. Repeating the argument for $\zeta_2$ shows that $p$ vanishes at both points.
\end{remark}

By linearity of contraction, the annihilator of $\Lambda \in R^*$ is the kernel of the linear map known as the \emph{Hankel operator} associated to $\Lambda$:
\[
H_{\Lambda} : R \to R^*, \quad p \mapsto p \aprod \Lambda.
\]

\begin{lemma}\label{rmk: changeCoordinatesKernel}
    Let $\zeta \in \K^n$ and let $\varphi$ 
    be the affine change of coordinates \eqref{affine:tranlsation}. 
 Then for any $\Lambda \in R^*$:
    \[
    \varphi(\ker H_\Lambda) = \ker H_{(\varphi^{-1})^*(\Lambda)},
    \]
    where $\varphi^*$ denotes the dual change of coordinates \eqref{dual:affine:tranlsation} given by the transpose of $\varphi$.
\end{lemma}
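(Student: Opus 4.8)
The plan is to unwind the definitions and reduce everything to the fact that $\varphi$ is a $\K$-algebra automorphism of $R$. First, since contraction is linear, $\ker H_\Lambda = \operatorname{Ann}(\Lambda)$, so it suffices to show $\varphi(\operatorname{Ann}(\Lambda)) = \operatorname{Ann}\bigl((\varphi^{-1})^*(\Lambda)\bigr)$. Here $\varphi$ is the automorphism $x_i \mapsto x_i + \zeta_i$ of \eqref{affine:tranlsation}, with inverse $x_i \mapsto x_i - \zeta_i$, and $(\varphi^{-1})^*$ acts by $(\varphi^{-1})^*(\Lambda)(r) = \Lambda(\varphi^{-1}(r))$ as in \eqref{dual:affine:tranlsation}.

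The key computation: fix $p \in R$ and an arbitrary test polynomial $r \in R$, and evaluate
\[
\bigl(\varphi(p) \aprod (\varphi^{-1})^*(\Lambda)\bigr)(r) = (\varphi^{-1})^*(\Lambda)\bigl(\varphi(p)\cdot r\bigr) = \Lambda\bigl(\varphi^{-1}(\varphi(p)\cdot r)\bigr) = \Lambda\bigl(p\cdot \varphi^{-1}(r)\bigr) = \bigl(p \aprod \Lambda\bigr)\bigl(\varphi^{-1}(r)\bigr),
\]
where the third equality uses that $\varphi^{-1}$ is a ring homomorphism and $\varphi^{-1}(\varphi(p)) = p$. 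Since $\varphi$ is bijective, $r \mapsto \varphi^{-1}(r)$ is a bijection of $R$ onto itself; hence the functional $\varphi(p)\aprod(\varphi^{-1})^*(\Lambda)$ vanishes identically on $R$ if and only if $p \aprod \Lambda$ does. That is, $\varphi(p) \in \operatorname{Ann}\bigl((\varphi^{-1})^*(\Lambda)\bigr)$ if and only if $p \in \operatorname{Ann}(\Lambda)$. Since $\varphi$ is also surjective, applying $\varphi$ to $\operatorname{Ann}(\Lambda)$ yields exactly $\operatorname{Ann}\bigl((\varphi^{-1})^*(\Lambda)\bigr)$, which is the claim.

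I do not expect a real obstacle here: the only point deserving care is that $\varphi$, and therefore $\varphi^{-1}$, is an \emph{algebra} homomorphism and not merely a linear map, which is precisely what lets one move $\varphi^{-1}$ through the product $\varphi(p)\cdot r$; this multiplicativity is exactly the property that would fail for a non-multiplicative linear change of the dual generator. One could add the remark that the same argument applies verbatim to the homogeneous change of coordinates \eqref{change:of:coordinates} on $S$, since that map is again an algebra automorphism, giving the corresponding identity $\varphi(\ker H_\Lambda) = \ker H_{(\varphi^{-1})^*(\Lambda)}$ on the graded level.
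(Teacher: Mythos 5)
Your proof is correct and follows essentially the same route as the paper: the central identity $\bigl(\varphi(p)\aprod(\varphi^{-1})^*(\Lambda)\bigr)(r)=(p\aprod\Lambda)(\varphi^{-1}(r))$, which rests on $\varphi^{-1}$ being an algebra homomorphism, is exactly the computation in the paper's proof. The only (harmless) difference is that you obtain both inclusions at once from the bijectivity of $r\mapsto\varphi^{-1}(r)$, whereas the paper carries out the converse inclusion with a second, symmetric computation.
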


\begin{proof}
    We show both inclusions. Let $\varphi(p) \in \varphi(\ker H_\Lambda)$. Then for all $q \in R$:
    \[
    (\varphi(p) \aprod (\varphi^{-1})^* \Lambda)(q) = \Lambda\left( (\varphi^{-1} \circ \varphi)(p) \cdot \varphi^{-1}(q) \right) = (p \aprod \Lambda)(\varphi^{-1}(q)) = 0.
    \]
    Conversely, if $p \aprod (\varphi^{-1})^* \Lambda = 0$, then for all $q \in R$:
    \[
    (\varphi^{-1}(p) \aprod \Lambda)(q) = \Lambda\left( \varphi^{-1}(p) \cdot \varphi^{-1}(\varphi(q)) \right) = ((\varphi^{-1})^* \Lambda)(p \cdot \varphi(q)) = (p \aprod (\varphi^{-1})^* \Lambda)(\varphi(q)) = 0.
    \]
\end{proof}
\begin{definition}
    Let $A$ be an Artinian $\K$-algebra. Its \emph{canonical module} $\omega_A$ is the dual vector space $\operatorname{Hom}_\K(A, \K)$ with $A$-module structure given by contraction:
    \[
    (p \aprod \Lambda)(q) = \Lambda(pq) \quad \forall p,q \in A,\, \Lambda \in \omega_A.
    \]
    We say that $A$ is \emph{Gorenstein} if $\omega_A$ is isomorphic to $A$ as an $A$-module, i.e., it is generated by a single element (called the \emph{dual generator}).
\end{definition}

Gorenstein Artinian algebras are quotients of Henkel operators with finite rank.

\begin{lemma}[{\cite[\S 3.1]{Polyexp}}]\label{lemma: GorensteinHankel}
    Let $\Lambda \in R^*$ be such that the algebra $R / \ker(H_\Lambda)$ has finite dimension. Then $R / \ker(H_\Lambda)$ is Gorenstein, with dual generator $\Lambda$. Conversely, every Gorenstein Artinian algebra is isomorphic to $R / \ker(H_\Lambda)$ for some $\Lambda \in R^*$.
\end{lemma}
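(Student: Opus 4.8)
The statement to prove is \Cref{lemma: GorensteinHankel}: for $\Lambda \in R^*$ with $\dim_\K R/\ker(H_\Lambda) < \infty$, the algebra $A := R/\ker(H_\Lambda)$ is Gorenstein with dual generator $\Lambda$; conversely every Artinian Gorenstein algebra arises this way.

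\textbf{Plan of proof.} The plan is to exploit the fact that $\ker(H_\Lambda)$ is not merely a subspace but an \emph{ideal} of $R$: since contraction satisfies $(pq)\aprod\Lambda = p\aprod(q\aprod\Lambda)$, if $q\aprod\Lambda = 0$ then $q\aprod(p\aprod\Lambda)$ need not vanish, but $(pq)\aprod \Lambda = p \aprod (q\aprod \Lambda)=0$, so $\ker(H_\Lambda)$ is an ideal and $A$ is a genuine quotient algebra — and it is Artinian by the finite-dimensionality hypothesis. First I would show that $\Lambda$ descends to a well-defined functional $\bar\Lambda \in \omega_A = \operatorname{Hom}_\K(A,\K)$: this is immediate because $\Lambda$ vanishes on $\ker(H_\Lambda)$ by \Cref{remark: contrVsorth} (if $p\aprod\Lambda = 0$ then $\Lambda(p) = 0$). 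Next I would define the $A$-module map $\theta: A \to \omega_A$ by $\theta(\bar p) = \bar p \aprod \bar\Lambda$, check it is well-defined and $A$-linear (both routine from the ideal property and the definition of the contraction action on $\omega_A$), and then prove it is an isomorphism. Since $\dim_\K A = \dim_\K \omega_A$, it suffices to prove injectivity: if $\bar p \aprod \bar\Lambda = 0$ in $\omega_A$, then for all $\bar q \in A$ we have $0 = (\bar p\aprod\bar\Lambda)(\bar q) = \bar\Lambda(\bar p\bar q) = \Lambda(pq)$, i.e. $(p\aprod\Lambda)(q) = 0$ for all $q\in R$, hence $p\aprod\Lambda = 0$, so $p \in \ker(H_\Lambda)$ and $\bar p = 0$. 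This shows $\omega_A \cong A$, generated by $\theta^{-1}$ of $\bar\Lambda$, i.e. $\bar\Lambda$ is the dual generator, and $A$ is Gorenstein.

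For the converse, I would start from an arbitrary Artinian Gorenstein $\K$-algebra $A$ with dual generator $\mu \in \omega_A$ (so $\omega_A = A\aprod \mu$). Since $A$ is Artinian, it is a quotient $A = R/I$ for a suitable polynomial ring $R = \K[x_1,\dots,x_n]$ (choosing $n$ to be the minimal number of generators of the maximal-ideal structure, or just any presentation), with $I$ of finite colength. Composing $\mu$ with the quotient map $R \twoheadrightarrow A$ gives $\Lambda \in R^*$. I would then verify $\ker(H_\Lambda) = I$: the inclusion $I \subseteq \ker(H_\Lambda)$ holds because for $p \in I$ and any $q \in R$, $(p\aprod\Lambda)(q) = \Lambda(pq) = \mu(\overline{pq}) = \mu(\bar p \bar q) = \mu(0) = 0$; conversely if $p \aprod\Lambda = 0$ then $\bar p \aprod \mu = 0$ in $\omega_A$ (same computation read backwards), and since $\mu$ generates $\omega_A$ as an $A$-module, $\bar p \aprod \mu = 0$ forces $\bar p = 0$ — indeed the map $a \mapsto a\aprod\mu$ is the isomorphism $A \to \omega_A$ witnessing Gorensteinness, so it has trivial kernel — hence $p \in I$. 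Therefore $A = R/I = R/\ker(H_\Lambda)$.

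\textbf{Main obstacle.} The computations are all short; the only genuine content is the observation that $\ker(H_\Lambda)$ is an ideal (so that $A$ is an algebra at all) and the identification, in the converse, of the kernel of "multiplication by the dual generator" $a\mapsto a\aprod\mu$ with zero — which is exactly the defining isomorphism $\omega_A\cong A$ of a Gorenstein algebra, read in the right direction. A minor point to handle carefully is that in the converse one must fix a presentation $A \cong R/I$ and check the contraction action on $\omega_A$ pulls back compatibly to the contraction action of $R$ on $R^*$; this is a formal consequence of the definitions, since the $R$-module structure on $\omega_A$ factors through $A$. Since the reference \cite[\S 3.1]{Polyexp} is cited, the write-up can be kept brief, but the argument above is self-contained given the earlier material (in particular \Cref{remark: contrVsorth} and the associativity of contraction).
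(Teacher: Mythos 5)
Your proof is correct. Note that the paper itself gives no proof of \Cref{lemma: GorensteinHankel}: it is imported from \cite[\S 3.1]{Polyexp}, so there is no internal argument to compare with; what you wrote is the standard self-contained proof that the cited source relies on, namely that $\ker(H_\Lambda)=\operatorname{Ann}(\Lambda)$ is an ideal by associativity of contraction, that $\Lambda$ descends to $\omega_A$ by \Cref{remark: contrVsorth}, that the $A$-linear map $\bar p\mapsto \bar p\aprod\bar\Lambda$ is injective by definition of $\ker(H_\Lambda)$ and hence bijective because $\dim_\K A=\dim_\K\omega_A<\infty$, and, conversely, that pulling a dual generator $\mu$ back along a presentation $R\twoheadrightarrow A=R/I$ yields $\Lambda=\mu\circ\pi$ with $\ker(H_\Lambda)=I$. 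Two cosmetic points. First, ``generated by $\theta^{-1}$ of $\bar\Lambda$'' should read ``generated by $\theta(1)=\bar\Lambda$''; the conclusion you draw is the intended one. Second, in the converse you assert that $a\mapsto a\aprod\mu$ \emph{is} the isomorphism witnessing Gorensteinness; with the paper's definition (Gorenstein means $\omega_A$ is cyclic) this map is a priori only surjective when $\mu$ is a generator, and its injectivity should be justified by the same finite-dimension count $\dim_\K A=\dim_\K\omega_A$ that you already used in the forward direction. Neither point affects the validity of the argument.
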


Gorenstein Artinian algebras are typically studied in the local case (supported at $0$), since any Artinian algebra decomposes as a sum of local Artinian algebras and Gorensteinness is a local property. Recall that for an Artinian local ring $(A, \mathfrak{m})$, the largest integer $d$ such that $\mathfrak{m}^d \neq 0$ is called the \emph{socle degree} of $A$.

\begin{theorem}[{\cite[Lemma 1.2]{Iarrobino}}]
    Let $A = R/I$ be a Gorenstein Artinian algebra supported at $0 \in \K^n$ with socle degree $d$. Then $A = R/\ker(H_\Lambda)$ for some polynomial $\Lambda \in \K_{dp}[Y_1, \ldots, Y_n]$ of degree $d$. Conversely, if $\Lambda \in \K_{dp}[Y_1, \ldots, Y_n]$ is a polynomial of degree $d$, then $R / \ker(H_\Lambda)$ is a local Gorenstein algebra supported at $0$, with socle degree $d$. If $A$ is graded, $\Lambda$ can be chosen homogeneous.
\end{theorem}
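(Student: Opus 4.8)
\emph{Overall strategy.} The plan is to prove the two implications separately and to derive the forward one from the converse, since the converse yields the characterisation of the socle degree in terms of $\deg\Lambda$. The result that $R/\ker(H_\Lambda)$ is Gorenstein whenever it is finite-dimensional, and the existence of \emph{some} dual generator $\Lambda\in R^*$ for an Artinian Gorenstein algebra, are already available from \Cref{lemma: GorensteinHankel}; so the content to be added is only the refinements ``$\Lambda$ is a polynomial'', ``of degree exactly $d$'', ``$R/\ker(H_\Lambda)$ is local at $0$'', and ``$\Lambda$ homogeneous in the graded case''.

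\emph{Converse: a degree-$d$ polynomial gives a local Gorenstein algebra with socle degree $d$.} I would begin here. For $\Lambda\in\K_{dp}[Y_1,\ldots,Y_n]$ of degree $d$, the inclusions $\mathfrak m^{d+1}\subseteq\operatorname{Ann}(\Lambda)\subseteq\mathfrak m$ with $\mathfrak m=(x_1,\ldots,x_n)$, noted above for polynomial $f$, show that $R/\ker(H_\Lambda)=R/\operatorname{Ann}(\Lambda)$ is a quotient of the finite-dimensional ring $R/\mathfrak m^{d+1}$, hence Artinian, hence Gorenstein with dual generator $\Lambda$ by \Cref{lemma: GorensteinHankel}. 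It is local supported at $0$ because any prime of $R$ containing $\operatorname{Ann}(\Lambda)\supseteq\mathfrak m^{d+1}$ must contain $\mathfrak m$, so $\mathfrak m$ is the unique maximal ideal involved. For the socle degree, $\mathfrak m^{d+1}\subseteq\operatorname{Ann}(\Lambda)$ gives $\overline{\mathfrak m}^{\,d+1}=0$ in the quotient; and choosing $\beta$ with $|\beta|=d$ and nonzero coefficient $c_\beta$ in $\Lambda=\sum_\gamma c_\gamma Y^{(\gamma)}$, the contraction formula \eqref{eq: ContractionOnMonomials} gives $x^\beta\aprod\Lambda=c_\beta\cdot 1\neq 0$, because any $\gamma\geq\beta$ componentwise with $|\gamma|\le d$ must equal $\beta$. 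Thus $\overline{\mathfrak m}^{\,d}\neq 0$ and the socle degree is exactly $d$.

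\emph{Forward: a local Gorenstein algebra with socle degree $d$ has a polynomial dual generator of degree $d$.} Let $A=R/I$ be local Artinian Gorenstein at $0$ with socle degree $d$. By \Cref{lemma: GorensteinHankel}, $A=R/\operatorname{Ann}(\Lambda)$ for some $\Lambda=\sum_\gamma c_\gamma Y^{(\gamma)}\in R^*$, a priori an infinite divided-power series. Socle degree $d$ forces $\mathfrak m^{d+1}\subseteq I=\operatorname{Ann}(\Lambda)$, so $x^\alpha\aprod\Lambda=0$ for every $|\alpha|=d+1$; expanding by \eqref{eq: ContractionOnMonomials}, $x^\alpha\aprod\Lambda=\sum_{\gamma\geq\alpha}c_\gamma Y^{(\gamma-\alpha)}$, whose vanishing forces $c_\gamma=0$ for all $\gamma\geq\alpha$ componentwise. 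Since every multi-index $\gamma$ with $|\gamma|\geq d+1$ dominates componentwise some $\alpha$ of weight exactly $d+1$ (just reduce the entries of $\gamma$), all such $c_\gamma$ vanish, so $\Lambda$ is a polynomial of degree $\le d$; its degree is exactly $d$ by the converse, since a polynomial dual generator of degree $e$ would yield socle degree $e$.

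\emph{Graded case, and the main obstacle.} If $I$ is homogeneous, decompose $\Lambda=\Lambda_0+\cdots+\Lambda_d$ into homogeneous divided-power components, with $\Lambda_d\neq 0$. For homogeneous $p$ the terms $p\aprod\Lambda_j$ lie in distinct degrees, so $p\aprod\Lambda=0$ iff $p\aprod\Lambda_j=0$ for all $j$; since $\operatorname{Ann}(\Lambda)=I$ is homogeneous this gives $\operatorname{Ann}(\Lambda)\subseteq\operatorname{Ann}(\Lambda_d)$, hence a surjection $A\twoheadrightarrow B:=R/\operatorname{Ann}(\Lambda_d)$, and $B$ is again local Artinian Gorenstein of socle degree $d$ by the first part. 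Here I would use that $\operatorname{Soc}(A)$ is one-dimensional for Artinian Gorenstein $A$ (so that $\operatorname{Soc}(A)$ coincides with the top power of the maximal ideal of $A$): a nonzero kernel of $A\to B$ would contain $\operatorname{Soc}(A)$, making its image in $B$ zero and lowering the socle degree of $B$ below $d$, a contradiction; hence $A\cong B$ and $\Lambda_d$ is a homogeneous dual generator of degree $d$. I expect the only non-formal point to be the forward direction — upgrading the ideal containment $\mathfrak m^{d+1}\subseteq\operatorname{Ann}(\Lambda)$ to the conclusion that the series $\Lambda$ is a degree-$d$ polynomial — which rests on the explicit contraction formula together with the elementary multi-index domination argument; the one-dimensionality of the Gorenstein socle is standard and can be cited or deduced from $\omega_A\cong A$ via $\operatorname{Soc}(A)\cong(A/\mathfrak m)^*$.
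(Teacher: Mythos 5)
Your proof is correct. Note, however, that the paper does not prove this statement at all: it is quoted as \cite[Lemma 1.2]{Iarrobino}, so there is no internal argument to compare with. What you supply is a self-contained derivation from \Cref{lemma: GorensteinHankel} together with the contraction formula \eqref{eq: ContractionOnMonomials}, and all three refinements you isolate (polynomiality and exact degree $d$ of the dual generator, locality at $0$ with socle degree $d$ in the converse, homogeneity in the graded case) are handled soundly: the multi-index domination argument correctly kills all coefficients $c_\gamma$ with $|\gamma|\ge d+1$ once $\mathfrak m^{d+1}\subseteq\operatorname{Ann}(\Lambda)$, the top-degree coefficient argument pins the socle degree, and the graded case via the surjection $A\twoheadrightarrow R/\operatorname{Ann}(\Lambda_d)$ and one-dimensionality of the Gorenstein socle is a legitimate (if slightly roundabout) substitute for the usual observation that a generator of the graded module $\omega_A=I^\perp$ can be chosen homogeneous. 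One small point to tighten: \Cref{lemma: GorensteinHankel} as stated only gives $A$ \emph{isomorphic} to some $R/\ker(H_\Lambda)$, whereas your forward argument needs $I=\operatorname{Ann}(\Lambda)$ for the given presentation $A=R/I$; this is immediate once you take $\Lambda$ to be a generator of $\omega_A=I^\perp\subset R^*$ (then $I\subseteq\operatorname{Ann}(\Lambda)$, and $p\aprod\Lambda=0$ forces $\ell(p)=0$ for every $\ell=q\aprod\Lambda\in I^\perp$, hence $p\in I$), so you should either spell out that one line or cite the lemma in this sharper form.
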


It is possible to establish a version of this result for arbitrary coordinates: apply  \Cref{rmk: changeCoordinatesKernel}, \Cref{lemma:DualChangeBasis} and obtain the corresponding change of basis in $R^*$.

\begin{corollary}[Generalized Macaulay Correspondence]\label{Thm: GeneralisedMacaulayCorrespondence}
    Let $A$ be a Gorenstein Artinian algebra of socle degree $d$ supported at $\zeta \in \K^n$. Then $A \cong R / \ker(H_\Lambda)$ for some $\Lambda \in R^*$ of the form $\mathds{1}_\zeta \circ H(\partial)$, where $H \in R$ is a polynomial of degree $d$ and $H(\partial)$ denotes the differential operator obtained by replacing each $x_i$ with $\partial / \partial x_i$. Conversely, if $\Lambda = \mathds{1}_\zeta \circ H(\partial)$ with $\deg H = d$, then $R / \ker(H_\Lambda)$ is a local Gorenstein algebra supported at $\zeta$, of socle degree $d$.
\end{corollary}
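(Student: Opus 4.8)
The plan is to reduce the statement to the case $\zeta=0$ already recorded above (\cite[Lemma 1.2]{Iarrobino}) and transport it through the affine translation $\varphi$ of \eqref{affine:tranlsation} with $\varphi(x_i)=x_i+\zeta_i$, using \Cref{rmk: changeCoordinatesKernel} to follow kernels of Hankel operators along the coordinate change and \Cref{lemma:DualChangeBasis} to identify the resulting dual generator explicitly. The key structural fact is that $\varphi$ is a degree-preserving ring automorphism of $R$ whose dual $\varphi^*$ acts on the basis $\{Y^{(\alpha)}\}$ exactly by the differential-operator formula $\varphi^*(Y^{(\alpha)})=\frac{1}{\alpha!}\mathds{1}_\zeta\circ\frac{\partial^\alpha}{\partial x^\alpha}$ of \Cref{lemma:DualChangeBasis}.

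For the forward direction, write $A\cong R/I$ with $I$ supported at $\zeta$ and socle degree $d$. Since $\varphi$ carries the maximal ideal $(x_1-\zeta_1,\dots,x_n-\zeta_n)$ to $(x_1,\dots,x_n)$, the ideal $\varphi(I)$ is supported at $0$, and $A\cong R/\varphi(I)$ is still Gorenstein Artinian of socle degree $d$. By the case $\zeta=0$, $\varphi(I)=\ker H_{\Lambda_0}$ for a polynomial $\Lambda_0\in\K_{dp}[Y_1,\dots,Y_n]$ of degree exactly $d$; write $\Lambda_0=\sum_{|\alpha|\le d}h_\alpha\,\alpha!\,Y^{(\alpha)}$ with $h_\alpha\ne 0$ for some $|\alpha|=d$. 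Then $I=\varphi^{-1}(\ker H_{\Lambda_0})$, and applying \Cref{rmk: changeCoordinatesKernel} to the translation $\varphi^{-1}$ (again of the form \eqref{affine:tranlsation}) gives $I=\ker H_{\varphi^*(\Lambda_0)}$. Finally \Cref{lemma:DualChangeBasis} yields $\varphi^*(\alpha!\,Y^{(\alpha)})=\mathds{1}_\zeta\circ\frac{\partial^\alpha}{\partial x^\alpha}$, whence
\[
\Lambda:=\varphi^*(\Lambda_0)=\mathds{1}_\zeta\circ\Bigl(\textstyle\sum_{|\alpha|\le d}h_\alpha\tfrac{\partial^\alpha}{\partial x^\alpha}\Bigr)=\mathds{1}_\zeta\circ H(\partial),\qquad H:=\textstyle\sum_{|\alpha|\le d}h_\alpha x^\alpha\in R,
\]
with $\deg H=d$, so $A\cong R/\ker H_\Lambda$ of the required form.

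For the converse, given $\Lambda=\mathds{1}_\zeta\circ H(\partial)$ with $H=\sum_{|\alpha|\le d}h_\alpha x^\alpha$ of degree $d$, I run this identification backwards: set $\Lambda_0:=\sum_{|\alpha|\le d}h_\alpha\,\alpha!\,Y^{(\alpha)}$, which by \Cref{lemma:DualChangeBasis} satisfies $\varphi^*(\Lambda_0)=\Lambda$ and is a polynomial of degree $d$ precisely because $\alpha!$ is invertible in characteristic zero (so the top-degree part of $H$ is matched by that of $\Lambda_0$). By \Cref{rmk: changeCoordinatesKernel}, $\ker H_\Lambda=\varphi^{-1}(\ker H_{\Lambda_0})$, so $\varphi$ induces an isomorphism $R/\ker H_\Lambda\cong R/\ker H_{\Lambda_0}$. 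The case $\zeta=0$ says the right-hand side is a local Gorenstein algebra supported at $0$ of socle degree $d$; since $\varphi^{-1}$ carries $(x_1,\dots,x_n)$ to $(x_1-\zeta_1,\dots,x_n-\zeta_n)$, the isomorphic algebra $R/\ker H_\Lambda$ is again local Gorenstein of socle degree $d$, now supported at $\zeta$.

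The only real difficulty is the bookkeeping of the maps: one must be careful that an ideal supported at $\zeta$ is carried to $0$ by $\varphi$ (not by $\varphi^{-1}$), that \Cref{rmk: changeCoordinatesKernel} is invoked for the translation $\varphi^{-1}$ with dual $\varphi^*$, and that \Cref{lemma:DualChangeBasis} is stated exactly for the translation by $+\zeta$, so the evaluation functional lands at $\zeta$ and not $-\zeta$. The accompanying subtlety is to verify that the dual generator genuinely has degree $d$ and not merely $\le d$; this is exactly where the characteristic-zero hypothesis enters, ensuring the passage $H\leftrightarrow\Lambda_0$ is degree-preserving.
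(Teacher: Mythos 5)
Your proposal is correct and follows essentially the same route the paper indicates: reduce to the case $\zeta=0$ (the cited result of Iarrobino) and transport it through the affine translation, using \Cref{rmk: changeCoordinatesKernel} to track kernels of Hankel operators and \Cref{lemma:DualChangeBasis} to identify the transported dual generator as $\mathds{1}_\zeta\circ H(\partial)$ with $\deg H=d$. Your write-up simply makes explicit the bookkeeping (direction of $\varphi$ versus $\varphi^{-1}$, the factor $\alpha!$) that the paper leaves to the reader.
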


\subsection{Hilbert Functions of Gorenstein algebras}

The Hilbert function of the associated graded algebra of a local Gorenstein algebra must satisfy certain structural properties, and admits a symmetric decomposition induced by the interplay of two natural filtrations. We recall the basic constructions from \cite{Iarrobino}.

\begin{definition}
    Let $A$ be a local Artinian $\K$-algebra with maximal ideal $\mathfrak{m}$. Its \emph{associated graded algebra} is
    \[
    gr(A) := \bigoplus_{i \geq 0} \mathfrak{m}^{i} / \mathfrak{m}^{i+1}.
    \]
    The \emph{Hilbert function} of $A$ is defined as
    \[
    H_A(i) := \dim_{\K} \left( \mathfrak{m}^{i} / \mathfrak{m}^{i+1} \right).
    \]
\end{definition}

Since $A / \mathfrak{m} \cong \K$, we have $H_A(0) = 1$. The \emph{socle degree} of $A$ is the largest integer $i$ for which $H_A(i) \neq 0$.

As explained in \cite{Iarrobino}, if $A$ is graded, the dual generator of the Gorenstein algebra induces nondegenerate bilinear pairings between graded components $
A_i \times A_{d-i} \to \K$,
where $d$ is the socle degree. This implies that the Hilbert function is symmetric 
$H_A(i) = H_A(d - i)$, 
and in particular, $H_A(d) = 1$. Moreover, in the graded case one has
\[
(0 : \mathfrak{m}^{d - i}) = \mathfrak{m}^{i + 1}.
\]
However, this identity does not hold in general if $A$ is not graded (see e.g., \cite[Example 3.28]{Joachim}). In such cases, two canonical filtrations naturally arise:

\begin{itemize}
    \item the \emph{powers filtration}:
    \[
    \{0\} \subseteq \mathfrak{m}^d \subseteq \mathfrak{m}^{d-1} \subseteq \cdots \subseteq \mathfrak{m} \subseteq A;
    \]
    \item the \emph{Löwy filtration} (see \cite{Iarrobino}):
    \[
    \{0\} \subseteq (0 : \mathfrak{m}) \subseteq (0 : \mathfrak{m}^2) \subseteq \cdots \subseteq (0 : \mathfrak{m}^{d+1}) = A.
    \]
\end{itemize}

These filtrations are related through duality.

\begin{lemma}[{\cite[Lemma 2.31]{Joachim}}] 
    Let $A$ be a local Artinian $\K$-algebra, and let $I \subseteq A$ be an ideal. Let $\Lambda$ be a generator of the canonical module $\omega_A$, and consider the pairing
$    A \times A \to \K, \quad (p,q) \mapsto \Lambda(pq)$.
    Then:
    \[
    (0 : I) = \{ a \in A \mid \Lambda(ai) = 0 \text{ for all } i \in I \}.
    \]
\end{lemma}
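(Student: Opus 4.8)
The plan is to prove the two inclusions separately; write $J := \{\, a \in A \mid \Lambda(ai) = 0 \text{ for all } i \in I \,\}$ for the set on the right-hand side.

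The inclusion $(0:I) \subseteq J$ I would dispose of first, as it is immediate: if $a \in (0:I)$ then $ai = 0$ for every $i \in I$, hence $\Lambda(ai) = \Lambda(0) = 0$, i.e. $a \in J$. (This is just the easy half of \Cref{remark: contrVsorth} read inside $A$ instead of inside $R$.)

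For the reverse inclusion the single structural input is that, $\Lambda$ being a generator of $\omega_A$, the $A$-linear contraction map $\mu_\Lambda \colon A \to \omega_A$, $b \mapsto b \aprod \Lambda$, is surjective; since $A$ is Artinian we have $\dim_\K A = \dim_\K \operatorname{Hom}_\K(A,\K) = \dim_\K \omega_A < \infty$, so $\mu_\Lambda$ is an isomorphism and in particular injective. Granting this, take $a \in J$ and fix an arbitrary $i \in I$. For every $q \in A$ we have $iq \in I$, because $I$ is an ideal, and therefore
\[
\bigl((ai) \aprod \Lambda\bigr)(q) \;=\; \Lambda\bigl((ai)\,q\bigr) \;=\; \Lambda\bigl(a\,(iq)\bigr) \;=\; 0 .
\]
Hence $(ai) \aprod \Lambda = 0$ as an element of $\omega_A$, and injectivity of $\mu_\Lambda$ forces $ai = 0$. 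Since $i \in I$ was arbitrary, $aI = 0$, that is $a \in (0:I)$, which establishes $J \subseteq (0:I)$.

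I do not expect a genuine obstacle here: the entire content is the injectivity of contraction against a dual generator — this is exactly where the Gorenstein/Artinian hypotheses are used, via the dimension count above (alternatively, one notes that $\omega_A=\operatorname{Hom}_\K(A,\K)$ is a faithful $A$-module, so $\operatorname{Ann}_A(\Lambda)=\operatorname{Ann}_A(\omega_A)=0$) — together with the elementary observation that $iq$ stays in $I$ for $q \in A$. Everything else is routine bookkeeping with the pairing $(p,q)\mapsto \Lambda(pq)$.
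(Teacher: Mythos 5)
Your proof is correct: the easy inclusion is immediate, and the reverse inclusion follows exactly as you say from the injectivity of $b \mapsto b \aprod \Lambda$, which holds because $\Lambda$ generates $\omega_A = \operatorname{Hom}_\K(A,\K)$ and $A$ is finite-dimensional (equivalently, via faithfulness of $\omega_A$). Note the paper does not prove this lemma at all — it is quoted from \cite[Lemma 2.31]{Joachim} — so your argument simply supplies the standard proof that the paper omits, and there is nothing to reconcile.
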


In particular, there is a natural duality between the graded pieces of the two filtrations:
\[
\left(\frac{\mathfrak{m}^{i}}{\mathfrak{m}^{i+1}}\right)^* \cong \frac{(0 : \mathfrak{m}^{i+1})}{(0 : \mathfrak{m}^i)},
\]
and therefore:
\[
H_A(i) = \dim_\K \left( \frac{(0 : \mathfrak{m}^{i+1})}{(0 : \mathfrak{m}^i)} \right).
\]

This leads to the following family of subquotients: for each $a = 0, \ldots, d$, define
\[
C_{a,i} := \frac{(0 : \mathfrak{m}^{d+1 - a - i}) \cap \mathfrak{m}^i}{(0 : \mathfrak{m}^{d+1 - a - i}) \cap \mathfrak{m}^{i+1}},
\]
\[
C_a := \bigoplus_{i = 0}^{d - a} C_{a,i}, \quad Q_a := C_a / C_{a+1}.
\]
Let $\Delta_{Q_a}(i) := \dim_\K (Q_a)_i$ denote the Hilbert function of $Q_a$. From the definitions, one sees that:
\[
\Delta_{Q_0}(0) = 1, \quad \Delta_{Q_a}(0) = 0 \text{ for all } a > 0.
\]

The modules $Q_a$ give a symmetric decomposition of the Hilbert function of $A$:
\begin{theorem}[{\cite{Iarrobino}}]\label{thm:HF:decomposition}
    The Hilbert functions $\Delta_{Q_a}$ satisfy the following properties:
    \begin{enumerate}
        \item \textbf{Decomposition:}
        \[
        H_A(i) = \sum_{a = 0}^{d - i} \Delta_{Q_a}(i).
        \]
        
        \item \textbf{Symmetry:} Each $\Delta_{Q_a}$ is symmetric around $(d - a)/2$:
        \[
        \Delta_{Q_a}(i) = \Delta_{Q_a}(d - a - i).
        \]
        
        \item \label{HF: PartialSums} \textbf{Partial sums:} For each $k \geq 0$, the partial sum $\sum_{a = 0}^{k} \Delta_{Q_a}$ is the Hilbert function of a graded $\K$-algebra generated in degree 1.
    \end{enumerate}
\end{theorem}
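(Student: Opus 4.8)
The plan is to follow Iarrobino's argument, exploiting the perfect Gorenstein pairing on $A$ and the interplay between the powers filtration and the Löwy filtration recalled above. Since $A$ is a local Gorenstein algebra with dual generator $\Lambda$, the bilinear form $\langle p,q\rangle:=\Lambda(pq)$ on $A$ is nondegenerate: if $\langle p,q\rangle=0$ for all $q\in A$ then $p\aprod\Lambda=0$, whence $p=0$ in $A\cong R/\ker H_{\Lambda}$ by \Cref{lemma: GorensteinHankel}. The colon--annihilator identity recalled above states that, for every ideal $I$, the space $(0:I)$ is the orthogonal $I^{\perp}$ with respect to $\langle\,,\,\rangle$; applying this to $I=\mathfrak m^{p}$ and using the reflexivity of a nondegenerate form yields the two orthogonality relations $(\mathfrak m^{p})^{\perp}=(0:\mathfrak m^{p})$ and $\bigl((0:\mathfrak m^{q})\bigr)^{\perp}=\mathfrak m^{q}$. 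Put $P^{i}:=\mathfrak m^{i}$ and $G^{q}:=(0:\mathfrak m^{d+1-q})$, so $G^{\bullet}$ is a decreasing filtration from $G^{0}=A$ to $G^{d+1}=0$ and the relations read $(P^{p})^{\perp}=G^{d+1-p}$, $(G^{q})^{\perp}=P^{d+1-q}$. A direct check from the definitions identifies $C_{a,i}$ with the image of $P^{i}\cap G^{a+i}$ in $gr(A)_{i}=P^{i}/P^{i+1}$, and shows that $(Q_{a})_{i}=C_{a,i}/C_{a+1,i}$ is the bidegree $(i,a+i)$ piece $gr^{i,a+i}(A):=\frac{P^{i}\cap G^{a+i}}{(P^{i+1}\cap G^{a+i})+(P^{i}\cap G^{a+i+1})}$ of the associated bigraded of the bifiltered vector space $A$. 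Assembling this dictionary is routine but is what makes the rest run smoothly.

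Granting the dictionary, part~(1) is a telescoping identity. Because the socle degree is $d$ we have $\mathfrak m^{d+1}=0$, hence $P^{i}\cdot\mathfrak m^{d+1-i}=0$, so $P^{i}\subseteq G^{i}$ and $C_{0,i}=gr(A)_{i}$; and for $a>d-i$ the ideal $\mathfrak m^{d+1-a-i}$ is the unit ideal, forcing $C_{a,i}=0$. Therefore $\sum_{a=0}^{d-i}\Delta_{Q_{a}}(i)=\sum_{a\ge 0}\dim\bigl(C_{a,i}/C_{a+1,i}\bigr)=\dim C_{0,i}=H_{A}(i)$.

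Part~(2) is the heart of the proof, and I would obtain it by a dimension count. Write $f(p,q):=\dim(P^{p}\cap G^{q})$. Since $(P^{p}\cap G^{q})^{\perp}=(P^{p})^{\perp}+(G^{q})^{\perp}=G^{d+1-p}+P^{d+1-q}$, repeated use of $\dim W+\dim W^{\perp}=\dim A$ gives the identity $f(p,q)=\dim P^{p}+\dim G^{q}-\dim A+f(d+1-q,\,d+1-p)$. Substituting this into $\dim gr^{p,q}(A)=f(p,q)-f(p+1,q)-f(p,q+1)+f(p+1,q+1)$, all contributions involving $\dim P^{\bullet}$, $\dim G^{\bullet}$ and $\dim A$ cancel, and one is left with $\dim gr^{p,q}(A)=\dim gr^{\,d-q,\,d-p}(A)$. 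Taking $(p,q)=(i,a+i)$, the mirror bidegree is $(d-a-i,\,d-i)$, which under the dictionary is precisely the bidegree of $(Q_{a})_{d-a-i}$; hence $\Delta_{Q_{a}}(i)=\Delta_{Q_{a}}(d-a-i)$. (If a canonical isomorphism is preferred to a numerical equality, the same orthogonality relations show that $\langle\,,\,\rangle$ descends to a perfect pairing $gr^{i,a+i}(A)\times gr^{d-a-i,d-i}(A)\to\K$, which is the more structural route and the one closest to \cite{Iarrobino}.)

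For part~(3), set $D_{k}:=\bigoplus_{i}C_{k+1,i}\subseteq gr(A)$. The key observation is that $D_{k}$ is a \emph{homogeneous ideal} of $gr(A)$: if $x\in\mathfrak m^{i}\cap(0:\mathfrak m^{d-k-i})$ represents a class in $C_{k+1,i}$ and $y\in\mathfrak m$, then $xy\in\mathfrak m^{i+1}$ while $xy\cdot\mathfrak m^{d-k-i-1}\subseteq x\cdot\mathfrak m^{d-k-i}=0$, so the class of $xy$ lies in $C_{k+1,i+1}$; as $gr(A)$ is generated in degree~$1$, closure under multiplication by $gr(A)_{1}$ suffices. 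Hence $gr(A)/D_{k}$ is a standard graded $\K$-algebra, and by the telescoping identity of part~(1) its degree-$i$ Hilbert value equals $\dim gr(A)_{i}-\dim C_{k+1,i}=\dim C_{0,i}-\dim C_{k+1,i}=\sum_{a=0}^{k}\Delta_{Q_{a}}(i)$, exhibiting every partial sum as the Hilbert function of a graded algebra generated in degree~$1$. I expect the real obstacle to be part~(2): the symmetry rests entirely on choosing the reindexing between the powers and Löwy filtrations so that the orthogonality relations deliver exactly the shift $i\mapsto d-a-i$, and on checking that the $C_{a,i}$ genuinely compute the bigraded pieces of the bifiltration; once that dictionary is secured, parts~(1) and~(3) are short.
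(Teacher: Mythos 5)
Your proposal is correct. The paper itself gives no proof of this theorem — it is quoted directly from Iarrobino — and your argument is a faithful and complete reconstruction of that classical proof: the identification of $C_{a,i}$ and $(Q_a)_i$ with the bigraded pieces of the bifiltration by $\mathfrak m^i$ and $(0:\mathfrak m^q)$, the telescoping for the decomposition, the orthogonality/dimension count (equivalently the induced perfect pairing $gr^{i,a+i}\times gr^{d-a-i,d-i}\to\K$) for the symmetry, and the observation that $\bigoplus_i C_{k+1,i}$ is a homogeneous ideal of $gr(A)$ for the partial-sum statement — so there is nothing to correct and no divergence from the approach the paper relies on.
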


\noindent
Note that \Cref{HF: PartialSums} in \Cref{thm:HF:decomposition} implies that each partial sum satisfies the Macaulay bound for Hilbert functions (see \cite{Joachim}).

\section{Refining the Cactus Rank Algorithm in Local Settings}\label{sec:main}
This section delves into the core contribution of our work: refining the classic algorithm for computing the cactus rank of a homogeneous polynomial (cf. \cite{Alessandra}) in a local setting. By leveraging the unique properties of local Gorenstein algebras, we develop effective methods to compute key algebraic invariants such as  the Hilbert function and its symmetric decomposition, within the algorithm's framework. Ultimately, these advancements pave the way for a more efficient and insightful computation of the generalized additive decomposition of a polynomial or of its extension, directly revealing the polynomial's cactus rank.
\subsection{The cactus algorithm}\label{sec:algorithms}

In this section, we recall how to compute a minimal apolar schemes to homogeneous polynomials in the global setting (cf. \cite{Alessandra}).

\begin{definition}[\cite{BR13}]
For $F\in S_d^*$, the \emph{cactus rank} of $F$ is the minimal length of a scheme apolar to $F$. The \emph{local cactus rank} is the minimal length of a local apolar scheme.
\end{definition}

The strategy developed in \cite{Alessandra} for the global cactus rank algorithm relies on the Hankel operator $H_\Lambda: R \to R^*$ defined as $H_\Lambda(p)(q)=\Lambda(pq)$. We recall here the crucial algebraic conditions from \cite{Bernard} for extending a truncated linear form $f\in R^*_{\leq d}$ to a form $\Lambda\in R^*$, necessary for defining finite-dimensional local algebras.

\begin{definition}
Let $V^* \subset R^*$ be two spaces equipped with a restriction map
\[
\mathrm{res} : R^* \longrightarrow V^*.
\]
Given $\Lambda \in V^*$, we say that $\tilde \Lambda \in R^*$ \emph{extends} $\Lambda$ if
\[
\mathrm{res}(\tilde \Lambda) = \tilde \Lambda\big|_{V^*} = \Lambda.
\]
\end{definition}

\begin{theorem}[\cite{Bernard}]
Given a monomial basis $B$ connected to $1$ and a truncated linear form $\Lambda\in \langle B\cdot B^+\rangle_{\leq d}^*$, an extension $\Tilde{\Lambda}\in R^*$ exists with $rank(H_{\Tilde{\Lambda}})=|B|$ and $B$ basis of $R/\ker(H_{\Tilde{\Lambda}})$ if and only if the associated matrices of multiplication by the variables commute and $\det(H_{\Lambda}^B)\neq 0$. Such extension is unique.
\end{theorem}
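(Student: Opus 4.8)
The plan is to reduce the statement to a known result in the literature on the symmetric tensor decomposition algorithm, namely the ``flat extension'' criterion of Mourrain (the cited \cite{Bernard}), and to verify that the hypotheses match. First I would set up the data carefully: a monomial set $B$ connected to $1$ (that is, closed under division, so that every monomial in $B$ has all its divisors in $B$ and $1 \in B$), its ``border'' $B^+ = (B \cup x_1 B \cup \cdots \cup x_n B)$, and the pairing space $\langle B \cdot B^+\rangle_{\leq d}$ spanned by products $bb'$ with $b\in B$, $b'\in B^+$ of degree at most $d$. The truncated form $\Lambda$ is given on this space, and the Hankel matrix $H_\Lambda^B = (\Lambda(bb'))_{b,b'\in B}$ makes sense purely from this data. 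The content is: an extension $\tilde\Lambda \in R^*$ with $\operatorname{rank}(H_{\tilde\Lambda}) = |B|$ and $B$ mapping to a basis of $R/\ker(H_{\tilde\Lambda})$ exists iff $\det(H_\Lambda^B)\neq 0$ and the formal multiplication matrices commute; and it is unique.

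The key steps, in order: (1) \textbf{Necessity.} Suppose such a $\tilde\Lambda$ exists. Then $A := R/\ker(H_{\tilde\Lambda})$ is Artinian of dimension $|B|$ with $B$ a basis; since $B$ is connected to $1$, the quotient map $R \to A$ identifies $B$ with a basis, so the classes of $x_iB$ are expressed in terms of $B$, defining honest multiplication operators $M_i: A \to A$. These commute because multiplication in the commutative ring $A$ commutes, and $M_i M_j = M_j M_i$ read off in the basis $B$ gives precisely the polynomial conditions on the entries of the partly-numeric, partly-formal matrices built from $H_{\tilde\Lambda}$; moreover $H_\Lambda^B$ is invertible because it represents the perfect pairing $A \times A \to \K$, $(p,q)\mapsto \tilde\Lambda(pq)$, in the basis $B$ (Gorenstein duality, \Cref{lemma: GorensteinHankel}). (2) \textbf{Sufficiency.} Conversely, given the commuting matrices $M_i$ and $\det(H_\Lambda^B)\neq 0$, one constructs $\tilde\Lambda$ by declaring $A = \K^{|B|}$ with multiplication defined by the $M_i$ (well-defined and associative/commutative precisely because the $M_i$ pairwise commute — this is where one invokes \cite{Bernard}), defining $\tilde\Lambda$ on $A$ via the linear functional whose Hankel matrix in the basis $B$ is $H_\Lambda^B$, pulling back to a form on $R$, and checking that its restriction to $\langle B\cdot B^+\rangle_{\leq d}$ recovers $\Lambda$ — the degree bound $\leq d$ is exactly what guarantees the given values of $\Lambda$ only involve products landing in the prescribed pairing space, so no inconsistency arises. (3) \textbf{Uniqueness.} Two such extensions $\tilde\Lambda_1, \tilde\Lambda_2$ give the same algebra structure (the $M_i$ are determined by $\Lambda$ on $B\cdot B^+$) and the same functional on $A$, hence agree on all of $R$ after reconstruction; alternatively, $\ker(H_{\tilde\Lambda})$ is generated by the border relations $x_ib - \sum M_i(b)$ together with enough of $B^+$, and once the ideal and the values on a complement are fixed, $\tilde\Lambda$ is determined.

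The main obstacle I anticipate is step (2): showing that the commutativity of the formal multiplication matrices is not merely necessary but genuinely \emph{sufficient} to produce a bona fide Artinian quotient of $R$ whose Hankel form restricts correctly — in particular that the ideal generated by the border relations $\{x_ib - \sum_{b'\in B} (M_i)_{b',b}\, b' : b \in B\}$ is such that $B$ descends to a basis of the quotient (no collapse, no extra relations). This is precisely the flat/border-basis extension theorem, and the honest thing is to cite \cite{Bernard} for it rather than reprove it; the work on our side is bookkeeping to confirm that the ``connected to $1$'' hypothesis on $B$ and the degree-$\leq d$ truncation match the hypotheses there, and that the det condition $\det(H_\Lambda^B)\neq 0$ corresponds to the full-rank requirement so that the constructed $\tilde\Lambda$ indeed has $\operatorname{rank}(H_{\tilde\Lambda}) = |B|$ exactly (not more).
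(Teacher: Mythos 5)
Your proposal is consistent with the paper's treatment: the paper does not prove this statement but imports it verbatim from \cite{Bernard} (it is the flat-extension/border-basis theorem, Theorem~4.2 there), and your outline — necessity from commutativity of multiplication in $R/\ker(H_{\Tilde{\Lambda}})$ plus invertibility of $H_\Lambda^B$ via the nondegenerate Gorenstein pairing, sufficiency and uniqueness deferred to the reconstruction argument of \cite{Bernard} — follows the same structure as the cited source. The only point to keep in mind is that in the precise formulation the entries of the multiplication matrices and of $H^B_{\Lambda(h)}$ in degrees $>d$ are the parameters $h_\alpha$, so the commutation and determinant conditions are conditions on those parameters rather than on numerical data alone, exactly as your sketch implicitly assumes when invoking the flat-extension step.
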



The essential ingredients used in \cite{Alessandra} for the cactus algorithm based on the previous result are:

\begin{itemize}
    \item (Cf.\cite{Bernard}) The fact that the multiplication operators $H_{a\thinstar \Lambda}$ for any $\Lambda\in R^*$ such that $\operatorname{rank} \; H_{\Lambda}<\infty$ and any $a\in A_{\Lambda}$ can be computed as
    \begin{equation}\label{MatMult}
    H_{a\thinstar \Lambda}=M_a^t \circ H_{\Lambda};\end{equation} 

    \item 
 (Cf. {\cite[Theorem 4.23]{BernardBook}}) \label{Thm: eigenvaluesMultOperatos}
        For every $a\in A_{\Lambda}$, the eigenvalues of the operators $M_a$ and $M_a^t$ are $\{a(\zeta_i), \ldots, a(\zeta_r)\}$ and the common eigenvectors of $M_{x_i}^t$, $i=1,\ldots,n$ are, up to scalar, $\mathds{1}_{\zeta_i}$.
    \item (Cf. {\cite[Proposition 3.2]{Alessandra}})\label{Prop: BasisInHankel}
    If $B\subseteq R/\ker H_\Lambda$ with $|B|=r$, then $B$ is a basis of the vector space $A_{\Lambda}$ if and only if $H_{\Lambda}^B$ is invertible and $\operatorname{rank} \; H_{\Lambda}=r$. Moreover, there exists a monomial basis $B$ that is a complete staircase.
\end{itemize}

\begin{remark}
The initial setup in \cite{Alessandra} for connecting a homogeneous polynomial $F \in S_d$ to a dual form in $S_d^*$ utilizes the apolar product introduced in \cite{Bernard}. Our approach, however, is intended to show that this apolar product can be entirely avoided by adopting the divided power setting, yielding the same dual form in $S_d^*$ in a more coordinate-free manner.

The apolar product of two homogeneous polynomials $F=\sum_{|\alpha|=d} f_{\alpha}x^{\alpha}$ and $G=\sum_{|\alpha|=d} g_{\alpha}x^{\alpha}\in S_d$, is: 
\[\langle G,F \rangle:=\sum_{|\alpha|=d} \frac{f_\alpha g_\alpha}{\binom{d}{\alpha}}=\frac{1}{d!}\sum_{|\alpha|=d} \alpha! f_\alpha g_\alpha. \]
In our divided power formalism, for $P\in S_d$ and $F\in S_d^*$, we use the apolarity pairing $\langle p, F\rangle_{\aprod}:= p\aprod F$ instead. If we identify the divided powers form of $F$ with a polynomial in $\K_{dp}[Y_0,...,Y_n]$, $F=\sum_{|\alpha|=d}\alpha! f_\alpha Y^{(\alpha)}$, then the pairing with $G=\sum_{|\alpha|=d} g_{\alpha}Y^{(\alpha)}$ is:
\[\langle G,F\rangle_\aprod=\langle G,\sum_{|\alpha|=d} \alpha! f_\alpha Y^{(\alpha)}\rangle_\aprod =\sum_{|\alpha|=d} \alpha! f_\alpha g_\alpha.\]
Evidently, these two expressions agree up to a factor of $d!$.
\end{remark}


\begin{notation}\label{notation: InitialRank}
Let $H_f^{\ostar}$ be a largest numerical submatrix of $H_{\Lambda(h)}$  with full rank.
\end{notation}
We can present the algorithm for the cactus rank described in \cite{Alessandra}.

\vspace{0.5cm}
\begin{mdframed}[]
\begin{alg}[Cactus rank and decomposition]\label{alg:cactus}\end{alg}
\vspace{0.1cm}
\noindent\textbf{Input:} A degree $d \geq 2$ polynomial $F \in S_d$ \\
    \textbf{Output:} Cactus rank of $F$ and its support $\zeta_1, \ldots ,\zeta_s\in \K^n$.

\begin{enumerate}
    \item Construct the matrix $H_{\Lambda(h)}$ with parameters $\{h_{\alpha}\}_{\alpha\in \mathbb N^n}$, $|\alpha|>d$.
    \item\label{alg:H:star} Set $r=\text{rank } H_f^{\ostar}$.
    \item\label{alg:basis} Take $B\subseteq R$ a complete staircase of monomials with $|B|=r$, do: 
    \begin{itemize}
        \item\label{alg:h's} Find $h$'s such that:
        \begin{itemize}
            \item $H_{\Lambda(h)}^B$ has nonzero determinant
            \item The multiplication operators $(M_{x_i})^t$ commute for all $i=1,\ldots,n$.
        \end{itemize}
        \item If found, go to \Cref{solutions}. If not, go to \Cref{alg:basis} with another choice of bases $B$. If all choices of $B$ with $|B|=r$ have been already performed, go to \Cref{increase:r}.
    \end{itemize}
    \item\label{increase:r} Set $r\to r+1$ and go to \Cref{alg:basis}.
    \item\label{solutions} The supports $\zeta_j$ of a minimal apolar scheme to $F$ are the common eigenvectors of $M_{x_i}$. The cactus rank of $F$ is the sum of the dimensions of the intersections over $j$ of the generalized egenspaces of $(M_{x_j}^B)^t$ relatives to $(\zeta_i)_j$. 
\end{enumerate}
\end{mdframed}
\vspace{0.5cm}

\subsection{The local cactus algorithm}\label{sec:GAD}

We now explore how the global cactus algorithm in \cite{Alessandra} can be adapted explicitly to the local setting. We start by understanding how to use the basic facts on Hilbert functions of local Artinian algebras described in \cite{Iarrobino}.

\subsubsection{Basis from Hilbert function and local conditions on parameters}

The Hilbert function (HF) of a graded Artinian Gorenstein algebra $R/I$, with $I$ homogeneous, uniquely determines the degrees of monomials in a monomial basis. However, for local Artinian Gorenstein algebras, if $I$ is not homogeneous, the Hilbert function alone does not determine a unique monomial basis. For example, in the algebra $\K[x,y]/(x-y^2,y^3)$, the Hilbert function is $(1,1,1)$, but distinct monomial bases  are possible: $\{1,x,y\}$  can be found via grevlex order, but since $x=y^2 \mod I$,  also $\{1,y,y^2\}$, is a basis for  $\K[x,y]/I$, precisely the one highlighting the structure of the Hilbert function.





The following results, which we prove explicitly, show that despite this ambiguity, it is always possible to choose a suitable basis compatible with the local structure:

\begin{lemma}\label{base:not:change}
Let $A=R/I$ be a local Artinian ring supported at $\zeta$ with a complete staircase basis $B$. After the affine change of coordinates $\varphi$ of \eqref{affine:tranlsation}, 
 the set $B$ remains a basis of $\varphi(A)$.
\end{lemma}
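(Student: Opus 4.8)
The plan is to exploit the fact that the translation $\varphi$ is \emph{unitriangular} with respect to the divisibility order on monomials, while a complete staircase is a down-set for that order; everything else is direct-sum bookkeeping. First I would fix the interpretation: $\varphi$ is a $\K$-algebra automorphism of $R$, so it descends to an isomorphism $R/I \xrightarrow{\ \sim\ } R/\varphi(I) =: \varphi(A)$, which in the Hankel language of \Cref{rmk: changeCoordinatesKernel} is exactly the map $R/\ker H_\Lambda \to R/\ker H_{(\varphi^{-1})^*\Lambda}$. Now a finite set of monomials $B \subseteq R$ descends to a $\K$-basis of $R/J$ precisely when $R = \langle B\rangle_\K \oplus J$, and an automorphism carries such a decomposition to $R = \varphi(\langle B\rangle_\K) \oplus \varphi(I)$. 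Hence the whole statement reduces to the single claim that $\langle B\rangle_\K$ is $\varphi$-invariant.

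To prove $\varphi(\langle B\rangle_\K) = \langle B\rangle_\K$, I would expand, for each monomial $x^\alpha \in B$,
\[
\varphi(x^\alpha) = (x+\zeta)^\alpha = \sum_{\gamma \le \alpha} \binom{\alpha}{\gamma}\, \zeta^{\alpha-\gamma} x^\gamma,
\]
so $\varphi(x^\alpha)$ lies in the span of the monomials $x^\gamma$ with $\gamma \le \alpha$. The point is that every such $x^\gamma$ already lies in $B$: by definition a staircase is closed under division by a single variable, and a short induction on $|\alpha| - |\gamma|$ upgrades this to closure under taking arbitrary divisors, so $\gamma \le \alpha$ and $x^\alpha \in B$ force $x^\gamma \in B$ (note that only this down-set property of $B$ is used, not the word ``complete''). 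Therefore $\varphi(\langle B\rangle_\K) \subseteq \langle B\rangle_\K$; since $A$ is Artinian, $\langle B\rangle_\K$ is finite dimensional and $\varphi$ is injective, so this inclusion is an equality. (Alternatively, running the same computation with $\varphi^{-1}$, which is translation by $-\zeta$, gives the reverse inclusion $\varphi^{-1}(\langle B\rangle_\K) \subseteq \langle B\rangle_\K$ directly, avoiding dimension counts.)

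Finally, applying $\varphi$ to the decomposition $R = \langle B\rangle_\K \oplus I$ and invoking the invariance just established yields $R = \langle B\rangle_\K \oplus \varphi(I)$, which is exactly the assertion that $B$ remains a (monomial, still complete-staircase) basis of $\varphi(A) = R/\varphi(I)$. I do not expect a genuine obstacle: the only substantive observation is that translations act unitriangularly on the monomial basis ordered by divisibility and that a staircase is a down-set. The one place to be careful is phrasing the meaning of $\varphi(A)$ so that it is compatible with \Cref{rmk: changeCoordinatesKernel}, ensuring the lemma slots into the local version of \Cref{alg:cactus} as intended.
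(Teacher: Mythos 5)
Your proposal is correct and follows essentially the same route as the paper: expand $\varphi(x^\alpha)$ by the binomial theorem, observe that every monomial appearing divides $x^\alpha$ and hence lies in $B$ by the staircase (down-set) property, conclude that $\langle B\rangle_\K$ is $\varphi$-invariant, and transport the basis through the ring isomorphism. Your version is slightly more careful about the direct-sum bookkeeping ($R=\langle B\rangle_\K\oplus I$) and about upgrading one inclusion to an equality, and your remark that only the down-set property (not completeness) is used is accurate, but the substance matches the paper's argument.
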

\begin{proof}
    Let $x^{\alpha}\in B$. Then 
    \[\varphi(x^\alpha)=(x_1-\zeta_1)^{\alpha_1}\cdots (x_n-\zeta_n)^{\alpha_n}=\prod_{i=1}^n \left( \sum_{k_i=0}^{\alpha_i} (-1)^{\alpha_i - k_i}\binom{\alpha_i}{k_i}x_i^{k_i}\zeta_i^{\alpha_i-k_i}\right)=\]
    \begin{equation}\label{summand:expansion}
      =  \sum_{k_1,\ldots,k_n=0}^{\alpha_1,...,\alpha_n} \left( \prod_{i=1}^n (-1)^{\alpha_i -k_i}\binom{\alpha_i}{k_i}\zeta_i^{\alpha_i-k_i}x_i^{k_i} \right).
    \end{equation}
    Remark that the linear combination \eqref{summand:expansion} above is made by $x^{\alpha}$ and lower degree terms, and all other monomials divide $x^{\alpha}$. 

Hence, since $B$ is a complete staircase, we have that \eqref{summand:expansion} is a linear combination of elements in $B$. Thus, the span of $\varphi(B)$ is the span of $B$, and since $\varphi$ is an isomorphism of rings and of vector spaces, we have that $B$ is also a basis of $\varphi(B)$. \end{proof}
This allows to assume the algebra to be supported at 0.




%


\paragraph{\textbf{Restriction on the number of bases for the local case}}
\begin{lemma}\label{lemma: BasisFromHF}
    Let $A= \K[x_1 , \ldots , x_n]/I$ be an Artinian local ring of codimension $n$. Then there exists a complete staircase of monomials $B$ whose reduction modulo $I$ is a $\K$-basis, and the number of elements in $B$ of degree $i$ is $\operatorname{HF}_A(i)$.
\end{lemma}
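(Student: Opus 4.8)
The plan is to reduce the statement to a \emph{graded} problem, solve it there with a monomial order, and then lift the answer back to $A$ along the $\mathfrak m$-adic filtration. First, since $\K$ is algebraically closed the residue field of $A$ is $\K$, so the maximal ideal of $A$ is the kernel of a $\K$-algebra map $R\to\K$, $x_i\mapsto\zeta_i$, for some $\zeta\in\K^n$. Applying a suitable affine translation as in \eqref{affine:tranlsation} (cf.\ \Cref{base:not:change}) we obtain an isomorphic algebra with maximal ideal $\mathfrak m=(x_1,\dots,x_n)$, the same Hilbert function, and the same embedding codimension; and by \Cref{base:not:change} a complete staircase that is a basis of the translated algebra is also one for $A$. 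So we may assume $A=R/I$ with $\mathfrak m=(x_1,\dots,x_n)$, and, since $A$ has embedding codimension $n$, i.e.\ $\dim_\K\mathfrak m/\mathfrak m^2=n$, we have $I\subseteq\mathfrak m^2$.

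Next, let $I_\ast\subseteq R$ be the homogeneous ideal generated by the lowest-degree homogeneous component of each element of $I$. The standard description of the associated graded ring gives an isomorphism $\operatorname{gr}_{\mathfrak m}(A)=\bigoplus_i\mathfrak m^i/\mathfrak m^{i+1}\cong R/I_\ast$ that identifies the degree-$i$ piece of $R/I_\ast$ with $\mathfrak m^i/\mathfrak m^{i+1}$ by sending the class of a degree-$i$ monomial $x^\alpha$ to the class of $x^\alpha$ in $A$; in particular $\dim_\K(R/I_\ast)_i=\operatorname{HF}_A(i)$, and $(I_\ast)_1=0$ because $I\subseteq\mathfrak m^2$. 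Now fix any graded monomial order on $R$ and let $B$ be the set of standard monomials of the homogeneous monomial ideal $\operatorname{in}(I_\ast)$. By Macaulay's basis theorem for homogeneous ideals, $B$ reduces to a $\K$-basis of $R/I_\ast$ compatible with the grading, hence $\#\{m\in B:\deg m=i\}=\dim_\K(R/I_\ast)_i=\operatorname{HF}_A(i)$. Being the complement of a monomial ideal, $B$ is closed under taking divisors, so it is a staircase; and since $x_1,\dots,x_n$ already span the $n$-dimensional space $(R/I_\ast)_1$, none of them lies in $\operatorname{in}(I_\ast)$, so all of them lie in $B$ and $B$ is a complete staircase.

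It remains to check that $B$ also reduces to a $\K$-basis of $A$ itself. Write $B=\bigsqcup_i B_i$ with $B_i$ the degree-$i$ monomials of $B$; under the identification above, each $B_i$ maps into $\mathfrak m^i\subseteq A$ and its image in $\mathfrak m^i/\mathfrak m^{i+1}$ is a basis. A routine induction on $i$ along the filtration $A\supseteq\mathfrak m\supseteq\cdots\supseteq\mathfrak m^{d+1}=0$ (with $d$ the socle degree) then shows that $\bigsqcup_{j\le i}B_j$ reduces to a basis of $A/\mathfrak m^{i+1}$; taking $i=d$ gives that $B$ reduces to a basis of $A$, with the degree counts arranged in the previous step. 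The only substantive step is this passage to $\operatorname{gr}_{\mathfrak m}(A)\cong R/I_\ast$: there the Hilbert function becomes an honest grading, so Macaulay's theorem supplies a monomial basis with exactly the prescribed degree distribution, after which the filtration argument transports it to $A$ verbatim; the hypothesis of embedding codimension $n$ enters precisely (and essentially) in forcing every variable into the staircase so that $B$ is genuinely \emph{complete}.
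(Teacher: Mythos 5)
Your proof is correct, but the decisive step is handled by a different mechanism than in the paper. Both arguments start identically: translate the support to the origin (via \Cref{base:not:change} and invariance of the Hilbert function), use that $\mathfrak{m}^i/\mathfrak{m}^{i+1}$ is spanned by classes of degree-$i$ monomials, and lift a choice of monomials along the $\mathfrak{m}$-adic filtration to get a basis of $A$. The paper then chooses, degree by degree, monomials whose residues form bases of the quotients $\mathfrak{m}^i/\mathfrak{m}^{i+1}$ and argues rather briefly that these choices can be made compatibly so that the resulting set is divisor-closed and contains $1,x_1,\dots,x_n$. You instead pass to the tangent cone $\operatorname{gr}_{\mathfrak m}(A)\cong R/I_\ast$ and take $B$ to be the standard monomials of $\operatorname{in}(I_\ast)$ for a graded monomial order: Macaulay's basis theorem yields the prescribed degree distribution, divisor-closedness is automatic since $B$ is the complement of a monomial ideal, and the codimension-$n$ hypothesis enters cleanly as $(I_\ast)_1=0$, which forces every variable (and $1$) into $B$; the final filtration argument transporting the graded basis back to $A$ is the same in both proofs. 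The payoff of your route is that the staircase property --- the point the paper treats most lightly --- becomes structural rather than a compatibility of ad hoc choices, and it comes with an effective recipe (compute $I_\ast$ and an initial ideal); the cost is invoking the associated-graded and Gr\"obner machinery, whereas the paper's argument stays entirely elementary inside $A$.
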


\begin{proof}
    By \Cref{base:not:change} and the invariance of the Hilbert function of a local Artinian algebra under affine change of coordinates, we may assume that $A$ is supported at the origin.  
    We will prove the existence of a basis $B$ of $A$ consisting of monomials forming a complete staircase, with the degree distribution prescribed by the Hilbert function. The proof is in three steps:

    \begin{itemize}
        \item Existence of a monomial basis respecting the degree distribution.  
        Let $\mathfrak{m}$ be the maximal ideal of $A$. For each $i \ge 0$, the quotient $\mathfrak{m}^i / \mathfrak{m}^{i+1}$ is spanned by the residue classes of monomials of total degree $i$.  
        Choosing a $\K$-basis of $\mathfrak{m}^i / \mathfrak{m}^{i+1}$ made of monomials for each $i$, and collecting them for all $i$, we obtain a monomial basis $B$ of $A$. The number of basis elements of degree $i$ is then $\dim_{\K} \mathfrak{m}^i / \mathfrak{m}^{i+1} = \operatorname{HF}_A(i)$.

        \item Staircase property.  
        Suppose $b \in B$ has degree $i$ and $x_j \in \{x_1,\ldots,x_n\}$. If $b x_j$ is nonzero in $\mathfrak{m}^{i+1}/\mathfrak{m}^{i+2}$, then $b \notin \mathfrak{m}^{i+1}$ (otherwise $b x_j \in \mathfrak{m}^{i+2}$, contradicting $b x_j \neq 0$).  
        This shows that whenever $b \in B$ and $b x_j \neq 0$ in $A$, the product $b x_j$ lies in the basis of degree $i+1$. Hence $B$ can be chosen to be closed under multiplication by variables, i.e. a complete staircase.

        \item Inclusion of $1$ and the variables.
        Since $\operatorname{HF}_A(0) = 1$ and $\operatorname{HF}_A(1) = n$, the constant $1$ and the classes of $x_1,\ldots,x_n$ must appear in $B$.
    \end{itemize}
\end{proof}

In our algorithmic setting, \Cref{lemma: BasisFromHF} is key to selecting admissible bases when searching for local apolar schemes. This refines \Cref{alg:basis} of the global algorithm in \cite{Alessandra} by  shrinking the search space: instead of testing all possible bases of length $r = \dim_\K A$, we can restrict to staircase bases consistent with the possible Hilbert functions.

We compare the growth of the search space in two settings:
\begin{itemize}
    \item 
\emph{Unconstrained search.}
Let $r = \dim_\K A$ and suppose we consider all monomial subsets of length $r$ from the set of all monomials of degree $\le d$. The number of such monomials is 
$N_d = \binom{n+d}{d}
$.

For fixed $d$ ($n$ resp.), $N_d$ grows polynomially in $n$ ($d$ resp.); if both grow, the growth is superpolynomial.  
The number of possible bases is therefore $
\binom{N_d}{r} = \binom{\binom{n+d}{d}}{r}$,
which becomes rapidly intractable as $n$ and $d$ increase.

\item \emph{Search constrained by the Hilbert function.}
If $\operatorname{HF}_A = (h_0,\ldots,h_d)$, then any staircase basis $B$ has exactly $h_k$ elements of degree $k$.  
The number of sets of monomials that merely satisfy the degree distribution is bounded above by $
\prod_{k=0}^d \binom{\binom{n+k-1}{k}}{h_k}$, 
where $\binom{n+k-1}{k}$ is the number of monomials of exact degree $k$.  
However, \Cref{lemma: BasisFromHF} ensures that $B$ must also be a complete staircase, which reduces again the number of admissible choices: for a given $\operatorname{HF}_A$, often only very few staircase bases exist.  

\end{itemize}
Thus, instead of choosing $r$ monomials arbitrarily from a large and rapidly growing set, we are choosing a highly structured set dictated by the Hilbert function, yielding a far smaller and more tractable search space.

\begin{example}

Consider $n=2$ variables with Hilbert function 
$\operatorname{HF}_A = (1,2,2,1,1)$,

corresponding, for instance, to $A \cong \K[x,y]/\operatorname{Ann}(x^{(4)}+y^{(3)})$, which has $\dim_\K A = r = 7$.  
Here $d = 4$ is the socle degree.

\emph{Unconstrained search.}
There are $N_4 = \binom{2+4}{4} = 15$ monomials of degree at most 4 in 2 variables.  
An unconstrained search would choose any $r=7$ monomials from these $15$, giving
$\binom{15}{7} = 6435$ possible candidate bases.

\sloppy\emph{Search constrained by $\operatorname{HF}_A$.}
The Hilbert function prescribes the degree distribution, i.e.:  $h_0=1$ element of degree 0, $h_1=2$ elements of degree 1,  
 $h_2=2$ elements of degree 2, 
 $h_3=1$ element of degree 3 and $h_4=1$ element of degree 4  
An upper bound for the number of bases respecting this distribution is
$
\binom{1}{1} \cdot \binom{2}{2} \cdot \binom{3}{2} \cdot \binom{4}{1} \cdot \binom{5}{1} 
= 1 \cdot 1 \cdot 3 \cdot 4\cdot 5 = 60$.
So at most $60$ sets of monomials satisfy the degree distribution.

\emph{Search restricted to staircase bases.}
\Cref{lemma: BasisFromHF} guarantees that the basis can be chosen as a \emph{complete staircase}, i.e.\ closed under divisibility.  
For $\operatorname{HF}_A=(1,2,2,1,1)$ there are only four such staircases: $
B_1 = \{1, x, y, x^2, xy, x^3, x^4\}$, $ 
B_2 = \{1, x, y, x^2, y^2, x^3, x^4\}$, $ 
B_3 = \{1, x, y, y^2, xy, y^3, y^4\}$ and $ 
B_4 = \{1, x, y, y^2, xy, y^3, y^4 \}$.
Thus, from $6435$ possible bases in the unconstrained setting, the search space collapses to just $4$ admissible candidates.

\end{example}
\begin{remark}
For $n=2$, staircase bases of an Artinian monomial algebra correspond to \emph{Young diagrams} determined by the Hilbert function: the monomials in the basis can be represented as lattice points in $\mathbb{N}^2$ below a monotone path.  

For $n=3$, staircase bases correspond to \emph{3-dimensional Ferrers diagrams}, 
also known as \emph{plane partitions}: the exponent set of the basis can be visualized as a stack of unit cubes in $\mathbb{N}^3$ with nonincreasing heights along each axis.  
A fixed Hilbert function imposes strong constraints on the shape of such a diagram. 

While for $n=2$ the number of staircases can be enumerated combinatorically via lattice paths, for $n=3$ this becomes a much harder problem: it amounts to counting plane partitions with a prescribed profile, for which no general closed formula is known. 
Only special families of Hilbert functions can be handled explicitly.
\end{remark}
\paragraph{\textbf{Restriction on the parameters for the local case}}

\begin{remark}\label{remark:local}    
By exploiting the local condition, we can make another  restriction. The operators $M_{x_i}$ (multiplication by variables) have a single eigenvalue at a local point  $\zeta=(1, \frac{1}{r}tr(M_{x_1}),\ldots,\frac{1}{r}tr(M_{x_n}))$, so one can enforce the algebraic constraints on the characteristic polynomials of these operators:
$$\operatorname{Char}_{M_{x_i}}(t)=\left(t-\frac{1}{r}tr(M_{x_i})\right)^r.$$
\end{remark}
These two restrictions give effective algebraic constraints to the algorithm for the local setting and we can refine it.

\begin{example}\label{Example:LocalCactus}
Consider the following polynomial:    
{\small{
\[
\begin{aligned}
F =\;& 6x_0^{(3)}x_1x_2 + 12x_0^{(2)}x_1^{(2)}x_2 + 18x_0x_1^{(3)}x_2 + 24x_1^{(4)}x_2 \\
&+ 12x_0^{(3)}x_2^{(2)} + 12x_0^{(2)}x_1x_2^{(2)} + 12x_0x_1^{(2)}x_2^{(2)} + 12x_1^{(3)}x_2^{(2)} \\
&+ 6x_0^{(2)}x_1x_2x_3 + 12x_0x_1^{(2)}x_2x_3 + 18x_1^{(3)}x_2x_3 \\
&+ 12x_0^{(2)}x_2^{(2)}x_3 + 12x_0x_1x_2^{(2)}x_3 + 12x_1^{(2)}x_2^{(2)}x_3 \\
&+ 6x_0x_1x_2x_3^{(2)} + 12x_1^{(2)}x_2x_3^{(2)} + 12x_0x_2^{(2)}x_3^{(2)} + 12x_1x_2^{(2)}x_3^{(2)} \\
&+ 12x_0^{(2)}x_3^{(3)} + 12x_0x_1x_3^{(3)} + 12x_1^{(2)}x_3^{(3)} + 6x_1x_2x_3^{(3)} + 12x_2^{(2)}x_3^{(3)} \\
&+ 48x_0x_3^{(4)} + 48x_1x_3^{(4)} + 120x_3^{(5)}
\end{aligned}
\]
}}

After building its Hankel matrix, start the iteration on the rank. The lowest possible rank is 4, since it is the number of essential variables of the polynomial. The only possible complete staircase basis is $\{1,x_1,x_2,x_3\}$, which has the following minor in the Hankel:
\[
\begin{bmatrix}
0 & 0 & 0 & 0 \\
0 & 0 & 1 & 0 \\
0 & 1 & 2 & 0 \\
0 & 0 & 0 & 0 \\
\end{bmatrix}.
\]
Since it has zero determinant, the local cactus rank cannot be 4. One checks that all the 6 possible complete staircase monomial basis of 5 elements also have minors in the Hankel with zero determinant. Thus, the rank is not 5. For $r=6$, there are 18 possible basis. We can use the Hilbert function criterion in the following way. For codimension 3, the first possible Hilbert function of a local Artinian Gorenstein algebra of length 6 is $(1,3,1,1)$. So for this $h$-vector there are 3 candidate basis $B$:
\[\{1,x_1,x_2,x_3,x_1^2,x_1^3\}, \quad \{1,x_1,x_2,x_3,x_2^2,x_2^3\}, \quad \{1,x_1,x_2,x_3,x_3^2,x_3^3\}.\]
The first two have vanishing minor in the Hankel, but the last has the associated matrix (up to scaling)
\[H_{\Lambda}^B=
\begin{bmatrix}
0 & 0 & 0 & 0 & 0 & 2 \\
0 & 0 & 1 & 0 & 0 & 2 \\
0 & 1 & 2 & 0 & 0 & 0 \\
0 & 0 & 0 & 0 & 2 & 8 \\
0 & 0 & 0 & 2 & 8 & 20 \\
2 & 2 & 0 & 8 & 20 & h_{(3,55)} \\
\end{bmatrix}
\]
which has determinant non-zero for every value of the parameter. We now build the multiplication operators with respect to $B$ and impose commutation and locality. We have a zero dimensional polynomial system with a unique solution. Thus, the rank is 6.  
\end{example}

These considerations set the stage for subsequent explicit constructions and algorithmic procedures, such as recovering local Gorenstein algebra decompositions (GAD) and Hilbert functions from commuting multiplication operators, as detailed in the following subsections.



\subsubsection{Hilbert function via multiplication operators}
In this section, we compute the Hilbert function and its symmetric decomposition from an explicit presentation of the apolar algebra $A=R/I$ via multiplication matrices in a complete staircase basis. This method applies when the local algebra is already determined (for instance, as the output of an extension algorithm for $F$), and leverages the full linear structure encoded in the multiplication operators $M_{x_i}$ to recover both the function and its decomposition.

As already observed in \Cref{remark:local}, when $A=R/I$ is a local Artinian algebra the support of the algebra $A$ is explicitly determined from the multiplication operators:
$\zeta=\left(\frac{1}{r}tr(M_{x_1}),\ldots,\frac{1}{r}tr(M_{x_n})\right).$
The multiplication matrices contain all the multiplicative structure of the algebra. For example, one can also  obtain the coordinates in the basis $B$ of the residue class of any $p\in R$.
\begin{definition}
Given an Artinian algebra $A=R/I$ and a basis $B$ of $A$, the \emph{normal form} of a polynomial $p\in R$ with respect to $I$ and $B$ is the representation of the residue class of $p$ in $A$ as a linear combination of elements in $B$.
\end{definition}


\begin{lemma}[{\cite[Proposition 3.2]{NormalForm}}]\label{lemma: NormalForm}
    Let $A=R/I$ be an Artinian $\K$-algebra and $B$ a $\K$-basis of $A$. Let $M_{x_i}$ be the matrix corresponding to the multiplication by $x_i$ in $A$ in the basis $B$. Then for every $p\in R$, the normal for of $p$ with respect to $I$ and $B$ is given by 
    $$p(M_x)(1)\in \langle B\rangle_\K,$$ where $p(M_x)$ is the linear operator in $A$ obtained by substituting each variable $x_i$ in $p$ by $M_{x_i}$.
    \end{lemma}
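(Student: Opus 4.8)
The statement asserts that for any $p \in R$, the normal form of $p$ with respect to $I$ and the basis $B$ equals $p(M_x)(1)$, where $1$ denotes the coordinate vector of the residue class of the constant polynomial $1$ in the basis $B$, and $p(M_x)$ is the operator obtained by substituting $M_{x_i}$ for $x_i$. The plan is to reduce the claim to the single structural fact that the assignment $x_i \mapsto M_{x_i}$ defines a $\K$-algebra homomorphism $\rho : R \to \operatorname{End}_\K(A)$ whose image is the subalgebra of multiplication operators, and that the evaluation-at-$1$ map $\operatorname{End}_\K(A) \to A$, $M \mapsto M(1)$, restricted to this subalgebra, inverts $\rho$ on the left in the appropriate sense.

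First I would record that $M_{x_i}$ is by definition the matrix of the $\K$-linear map ``multiplication by $x_i$'' on $A = R/I$, expressed in the basis $B$; in particular $M_{x_i} M_{x_j} = M_{x_j} M_{x_i}$ since multiplication in the commutative ring $A$ is associative and commutative, and $M_{x_i}$ and $M_{x_j}$ both represent multiplication operators. Next, for a monomial $x^\alpha = x_1^{\alpha_1}\cdots x_n^{\alpha_n}$, the operator $M_{x_1}^{\alpha_1}\cdots M_{x_n}^{\alpha_n}$ represents multiplication by $\bar{x}^\alpha$ in $A$ (by induction on $|\alpha|$, using that composing the multiplication-by-$x_i$ maps corresponds to multiplying by the product in $A$). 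Extending $\K$-linearly, for any $p = \sum_\alpha c_\alpha x^\alpha \in R$, the operator $p(M_x) = \sum_\alpha c_\alpha M_{x_1}^{\alpha_1}\cdots M_{x_n}^{\alpha_n}$ represents multiplication by the residue class $\bar p$ of $p$ in $A$. Applying this operator to the coordinate vector of $\bar 1$ yields the coordinate vector of $\bar p \cdot \bar 1 = \bar p$, expressed in the basis $B$; this coordinate vector is precisely the normal form of $p$ with respect to $I$ and $B$, by the definition of normal form given just above the lemma statement.

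The only genuinely delicate point is the well-definedness of the substitution $p \mapsto p(M_x)$: since the $M_{x_i}$ commute, any monomial ordering used to write $p(M_x)$ gives the same operator, so the expression is unambiguous, and it is independent of the chosen polynomial representative since if $p \in I$ then $\bar p = 0$ in $A$, so multiplication by $\bar p$ is the zero operator and $p(M_x)(1) = 0$. This is all routine once the algebra-homomorphism property is in place, so I do not anticipate a real obstacle; the ``hard part'' is merely being careful that the claim is about the operator $p(M_x)$ acting on the coordinate vector of $\bar 1$, not on an arbitrary vector, and that the output is read as coordinates in $B$. Since this is cited from \cite[Proposition 3.2]{NormalForm}, a short self-contained argument along these lines suffices.
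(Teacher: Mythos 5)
Your proposal is correct. The paper gives no proof of this lemma at all — it is quoted from \cite[Proposition 3.2]{NormalForm} — so there is no in-paper argument to compare against; your self-contained reasoning (the substitution $x_i \mapsto M_{x_i}$ is unambiguous because the multiplication matrices commute, $p(M_x)$ is the matrix of multiplication by the class $\bar p$ since this holds on monomials and extends linearly, and applying it to the coordinate vector of $\bar 1$ yields the coordinates of $\bar p$ in $B$, which is exactly the normal form as defined just before the lemma) is the standard justification and is sound.
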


Note that, since by assumption $1\in B$, the coordinates of $p(M_x)(1)$ is just the first column of $p(M_x)$.

This characterization immediately provides a method to generate powers of the maximal ideal $\m_\zeta$. Indeed, $\m_\zeta^i$ is generated as an ideal by the normal forms of $(x-\zeta)^\alpha$ with $|\alpha|=i$, computed explicitly as $(M_x-\zeta I)^\alpha(1)$. We abbreviate this as $x_{\zeta}^\alpha$. Consequently, the socle degree of $A$ is the smallest integer $d$ such that $x_{\zeta}^\alpha=0$ for all $|\alpha|=d$.
Therefore, as a vector subspace, $\m^{i}$ is generated by the class of $\{x_{\zeta}^{\alpha}\}_{i\leq |\alpha|\leq d}$. Thus, we have

\[H_A(i)=\dim_{\K} \frac{\m^{i}}{\m^{i+1}}=\dim_{\K} \langle x_{\zeta}^{\alpha}\rangle_{i\leq |\alpha|\leq d} - \dim_{\K} \langle x_{\zeta}^{\alpha}\rangle_{i+1\leq |\alpha|\leq d}\]
From these observations, we obtain an effective algorithm to compute the Hilbert function $H_A$:
\vspace{0.5cm}
\begin{mdframed}[]
\begin{alg}[Hilbert Function from commuting matrices]\label{alg:HF}\end{alg}
\vspace{0.1cm}
\noindent\textbf{Input:} A list $M$ of $n$ pairwise commuting matrices  of size $r$ with a unique eigenvalue. \\
    \textbf{Output:} The Hilbert function of the Artinian and local algebra $A$, with $M_i$ the matrix corresponding to the multiplication by $x_i$ in $A$.

 \begin{enumerate}
     \item Compute the support of the algebra: $$\zeta=\frac{1}{r}(tr(M_1),\ldots,tr(M_n)).$$
     \item Define $M'_i = M_i - \zeta_i\cdot I$, $i=1,\ldots, n$.
     \item Compute the generators of the powers of the maximal ideal: set $k=0$ and $\text{GensMax}_0= \{1\}$. While $\text{GensMax}_k \neq 0$ do:


     \begin{itemize}
         \item Compute the generators of the $k+1$-th power of the maximal ideal $$\text{GensMax}_{k+1}= \{M^L \text{for $L$ in compositions$(k+1,n)$}\},$$ where \text{compositions$(k,n)$} is the set of partitions of the integer $k$ with $n$ summands. 
         \item Set $k \; \rightarrow k+1$
     \end{itemize}

     \item Define $d:=k$, the socle degree of the algebra.
     \item For $i$ from 1 to $d$ do:
     \begin{itemize}
         \item Define $\text{Max}_i$ as the matrix of vectors in $\text{GensMax}_j$ for $j$ from $i$ to $d$. 
     \end{itemize}
     \item Print Hilbert function: $H_A(i)=\text{rank}(\text{Max}_i)-\text{rank}(\text{Max}_{i+1})$ for $i$ from $1$ to $d$.
     \end{enumerate}
\end{mdframed}
\vspace{0.5cm}

\begin{example}\label{Example: HF}
    Consider the algebra
    $$A=\K[x,y,z]/\operatorname{Ann}\left(2z^{(3)}+xy+2y^{(2)}\right).$$
    It is Artinian and of length 6, with $\{1,x,y,z,z^2,z^3\}$ as a possible basis. In this basis, the matrices corresponding to the multiplication by the variables are:
{\scriptsize

\[
\textstyle
M_x =
\begin{bmatrix}
0 & 0 & 0 & 0 & 0 & 0 \\
1 & 0 & 0 & 0 & 0 & 0 \\
0 & 0 & 0 & 0 & 0 & 0 \\
0 & 0 & 0 & 0 & 0 & 0 \\
0 & 0 & 0 & 0 & 0 & 0 \\
0 & 0 & \tfrac{1}{2} & 0 & 0 & 0 \\
\end{bmatrix}
\quad
M_y =
\begin{bmatrix}
0 & 0 & 0 & 0 & 0 & 0 \\
0 & 0 & 0 & 0 & 0 & 0 \\
1 & 0 & 0 & 0 & 0 & 0 \\
0 & 0 & 0 & 0 & 0 & 0 \\
0 & 0 & 0 & 0 & 0 & 0 \\
0 & \tfrac{1}{2} & 1 & 0 & 0 & 0 \\
\end{bmatrix}
\quad
M_z =
\begin{bmatrix}
0 & 0 & 0 & 0 & 0 & 0 \\
0 & 0 & 0 & 0 & 0 & 0 \\
0 & 0 & 0 & 0 & 0 & 0 \\
1 & 0 & 0 & 0 & 0 & 0 \\
0 & 0 & 0 & 1 & 0 & 0 \\
0 & 0 & 0 & 0 & 1 & 1 \\
\end{bmatrix}.
\]
}

We clearly see that the three matrices have the unique eigenvalue 0 (as expected since it is the quotient of the annihilator of a polynomial). Then, for instance, the ideal $\m^3$ is generated as a $\K$-vector space by the column vector
{\small{
\[M_z^3(1) =
\begin{bmatrix}
0 \\
0 \\
0 \\
0 \\
0 \\
1 \\
\end{bmatrix}.\]
}}
In our basis, it corresponds to $z^3$.
All the other combinations $M_x^{\alpha_1}\cdot M_y^{\alpha_2}\cdot M_z^{\alpha_3}$ with $\alpha_1+\alpha_2+\alpha_3=3$ give the zero matrix. 
Further computations show that there are no non-zero matrices corresponding to elements in $\m^4$. Hence, the socle degree of $A$ is $3$. Similarly, $\m^2$ is generated as a $\K$-vector space by the column vectors corresponding to $\{z^2,z^3\}$:
{\small{
\[
\begin{bmatrix}
0 & 0\\
0 & 0\\
0 & 0\\
0 & 0\\
0 & 1\\
1 & 0\\
\end{bmatrix}.\]}}

Computing the different powers of the matrices and multiplying them accordingly, we obtain the Hilbert function $(1,3,1,1)$. 
\end{example}

The symmetric decomposition of the Hilbert function is similarly derived by explicitly computing annihilators $(0:\m^{i})$. Specifically, for each $i$, one solves the linear system:

\begin{equation}\label{eq: annOfMax}
\sum_{i}\lambda_i b_i x_\zeta^{\alpha}=0, \quad i\leq |\alpha|\leq d,
\end{equation}
where $B={b_j}$ is the chosen monomial basis. This system's dimension equals $\dim_\K(0:\m^{i})$, leading to the full symmetric decomposition.


\begin{remark}
Each monomial $x_\zeta^{\alpha}$ can be expressed as a linear combination of the basis elements $b_i$, and products are computed via 
$
b_i b_j = (b_i b_j)(M_x)(1)
$.
The basis $B$ itself need not be known explicitly: it can be reconstructed (up to equivalence modulo $I$) from the multiplication matrices.  
Specifically, if for some multi-index $\alpha \in \mathbb{N}^n$ the matrix $M_x^{\alpha}$ has a single nonzero entry in its first column, then $x^{\alpha}$ corresponds to a basis element.  
See \Cref{Example: SymmetricDecompFromMatrices} for an explicit instance.
\end{remark}

Using the definitions of the symmetric filtrations, we can recover the full symmetric decomposition. 

\begin{example}\label{Example: SymmetricDecompFromMatrices}
    Let us compute the symmetric decomposition of the Hilbert function obtained in \Cref{Example: HF}.
\bigskip

    Let us first see how to recover the elements of the basis from the matrices $\{M_x,M_y,M_z\}$. Since the size of the matrices is $6$ and $\{1,x,y,z\}$ is always part of a complete monomial staircase basis, we look for two other elements of the basis. We see that 
    {\scriptsize
\[
M_z^2 =
\begin{bmatrix}
0 & 0 & 0 & 0 & 0 & 0 \\
0 & 0 & 0 & 0 & 0 & 0 \\
0 & 0 & 0 & 0 & 0 & 0 \\
0 & 0 & 0 & 0 & 0 & 0 \\
1 & 0 & 0 & 0 & 0 & 0 \\
0 & 0 & 0 & 1 & 0 & 0 \\
\end{bmatrix}
\quad
M_z^3 =
\begin{bmatrix}
0 & 0 & 0 & 0 & 0 & 0 \\
0 & 0 & 0 & 0 & 0 & 0 \\
0 & 0 & 0 & 0 & 0 & 0 \\
0 & 0 & 0 & 0 & 0 & 0 \\
0 & 0 & 0 & 0 & 0 & 0 \\
1 & 0 & 0 & 0 & 0 & 0 \\
\end{bmatrix}.
\]
}
Looking at the first column of the first matrix we see that $z^2$ is in the basis, while the second matrix shows $z^3$ is in the basis. One can check that we also have $M_y^2=M_z^3$, which is consistent with the fact that $y^2$ is equal to $z^3$ modulo $I$. 

We have seen the computation of the powers of the maximal ideal. For the annihilators of the powers of the maximal ideal, applying \eqref{eq: annOfMax} we have that for instance $(0: \m^3)$ is the space of polynomials in $A$ whose coordinates in the basis $\{1,x,y,z,z^2,z^3\}$ are in the kernel of this matrix: 
\[
\begin{bmatrix}
1 & 0 & 0 & 0 & 0 & 0 \\
0 & 0 & 0 & 1 & 0 & 0 
\end{bmatrix}\]
That is, if a polynomial in $A$ is $\lambda_1+\lambda_2x+\ldots + \lambda6z_3$, it must verify $\lambda_1=\lambda_3=0$. Note that this is consistent with the fact that $\m^2$ is generated by $z^2,z^3$.

We can compute the rest of the annihilators and then the intersections with powers of maximal ideals, obtaining the decomposition 

\begin{table}[h]
\centering
\begin{tabular}{|l|l|l|l|l|}
\hline
Degree         & 0 & 1 & 2 & 3  \\ \hline
$\Delta_{Q_0}$ & 1 & 1 & 1 & 1  \\ \hline
$\Delta_{Q_1}$ & 0 & 2 & 0 & 0  \\ \hline
Total          & 1 & 3 & 1 & 1  \\ \hline
\end{tabular} \,.
\end{table}

\end{example}
\subsubsection{Recovery of Local Generalized Additive Decomposition}\label{Section:GAD}

In this subsection we describe how to recover a local Generalized Additive Decomposition (GAD) of  given $F$ (or of one of its extensions) once the support and socle degree of the desired local GAD are fixed.
This viewpoint provides a constructive procedure, starting from the prescribed data.

After a suitable change of coordinates, a homogeneous polynomial $F$ admitting a local GAD supported at a point can be expressed in the form
\begin{equation}\label{GADLocal}
    F = L^{(k)} \cdot G, \quad L \in S_1^*, \; G \in S_{d-k}^*,
\end{equation}
where $L$ is a linear form vanishing at the support point.  
Such an expression naturally arises from dehomogenizing $F$ with respect to $L$; the resulting local apolar schemes were explicitly analyzed in \cite{GAD}, where the scheme is said to be \emph{evinced by the GAD}.


The relationship between $F$ and its minimal local apolar schemes is clarified in \cite[Proposition~4]{BJPR}: for schemes supported at $[1:0:\dots:0]$, the dehomogenization $f = F(Y_0=1)$ and the form $g$ defining the apolar scheme $\Gamma = \operatorname{Spec}(R/\operatorname{Ann}(g))$  share the same degree-$d$ tail. By an affine change of coordinates, this applies to any support point.


Equivalently, a minimal local apolar scheme supported at $[1:0:\dots:0]$ is \emph{evinced} by a form 
\[
G = Y_0^{(d'-k)} \cdot H \in S_{d'}^*, \quad d' \ge d,
\]
such that $G(Y_0=1)$ has the same degree-$d$ tail as $f=F(Y_0=1)$.

\begin{definition}\label{def:extension}
Let $F \in S_d^*$ be a homogeneous polynomial.  
An \emph{extension} of $F$ is a form $G \in S_{d'}^*$ with $d' \ge d$ such that there exists a linear form $L \in S_1^*$ and an integer $m = d' - d \ge 0$ for which 
\[
\partial_L^{\,m} G = F.
\]
When $m=0$ (equivalently $d'=d$), $G$ coincides with $F$; for $m>0$, $G$ is a genuine higher-degree extension of $F$.
\end{definition}
Given $F\in S_d^*$, the local cactus algorithm finds a functional $\Lambda\in R^*$ that extends $F(Y_0=1)\in R_{\leq d}$, so that the algebra $A=R/\ker H_\Lambda$ is local and has minimal length. The minimal apolar scheme is then $Z=\operatorname{Spec}(A)$. The support $\zeta\in \K^n$ is recovered from the multiplication matrices, while the remaining structure is obtained from the dual generator $G$ associated to $\Lambda$.

\begin{remark}
By the Generalized Macaulay Correspondence \Cref{Thm: GeneralisedMacaulayCorrespondence}, a local Gorenstein algebra $A = R/\ker H_\Lambda$ of socle degree $d'$ supported at $\zeta$ corresponds to a functional 
\[
\Lambda = \mathds{1}_\zeta \circ G(\partial),
\]
where $G \in S_{d'}^*$ is a dual generator (possibly an extension of $F$).

As shown in \cite[Proposition~4.3]{GAD}, $Z$ is evinced by a GAD of $F$ with $G=F$ and $d'=d$ if and only if
\[
\mathfrak{p}_L^{d+1} \subseteq I(Z),
\]
where $\mathfrak{p}_L$ is the homogeneous maximal ideal of the support point $[L] \in \mathbb{P}^n$. For a local scheme supported at $[L]$, the socle degree of $R/\ker H_\Lambda$ is the smallest $m$ such that $\mathfrak{p}_L^{m+1} \subseteq I(Z)$. Hence:
\begin{itemize}
    \item If $\operatorname{socdeg}(R/\ker H_\Lambda) \le d$, then $G=F$ (no genuine extension is needed).  
    \item If $\operatorname{socdeg}(R/\ker H_\Lambda) > d$, then $G$ has degree $d'>d$ and is a genuine extension of $F$.
\end{itemize}
\end{remark}

The approach to recover a GAD of either $F$ or of an extension of $F$  depends on whether the socle degree does not exceed, or exceeds, the degree of $F$.

\medskip

\paragraph{\textbf{Socle degree $\leq  d$:}}  In this case, a GAD of $F$ exists explicitly as:
\[F=(x_0+\zeta_1x_1+\cdots+\zeta_nx_n)^{(d-k)}H\]
where $H$ is recovered by performing a suitable affine coordinate shift to move $\zeta$ to the origin. After the shift, one can easily factor out the maximal possible power of $x_0$ from $F$.

%
%
So assume we have the minimal apolar scheme $Z$ to $F\in S_d^*$, and we know the support $\zeta=(\zeta_1,\ldots,\zeta_n)\neq 0$. Therefore $Z$ is defined locally by $\text{Spec}(R/\ker(H_\Lambda))$ for $\Lambda$ an infinite series. Consider the change of coordinates $\varphi$ in $S$ induced by 
\[x_0\mapsto x_0 \quad \quad x_i=x_i-\zeta_ix_0 \quad \forall i=1,\ldots, n\]
The induced affine change of coordinates $\varphi$ on $R$ is given by $x_i\mapsto x_i-\zeta_i$. Now $Z'=\varphi(Z)$ is defined locally by $\text{Spec}\left(R/\ker H_{(\varphi')^*(\Lambda)}\right)$. By \Cref{lemma:DualChangeBasis}, $\Lambda$ is now a polynomial $\Tilde{f}\in \K_{dp}[Y_1,\ldots, Y_n]$, and by hypothesis $f$ has degree $k\leq d$. Therefore we have that $Z'$ is the minimal local apolar scheme to
\[\varphi^*(F)= \operatorname{Homog}(\Tilde{f},x_0, d).\]
Dealing carefully with divided powers we obtain a decomposition 
\[\varphi^*(F)=Y_0^{d-k}\cdot \Tilde{H}.\]
Hence,
\[F=(\varphi^{-1})^*(Y_0^{(d-k)})\cdot (\varphi^{-1})^*(\Tilde{H})=
(x_0+\zeta_1x_1+\ldots+\zeta_nx_n)^{(d-k)}\cdot (\varphi^{-1})^*(\Tilde{H}),\]
which gives the desired decomposition of $F$.

\medskip

In terms of an algorithm, once we have the matrices corresponding to the multiplication by the variables in the algebra $R/\ker H_\Lambda$, for some $\Lambda\in R^*$, we can easily compute the socle degree and the support of the scheme $\zeta$. If the socle degree is not greater than the degree of $F\in S_d^*$, then we can apply the method described above, illustrated in the following example.

\begin{example}
    Consider the following homogeneous polynomial in divided powers: 

\[
\begin{aligned}
 F=&
3x_0^{(3)} x_1 + 8x_0^{(2)} x_1^{(2)} + 12x_0 x_1^{(3)} - 2x_0^{(2)} x_1 x_2 - 4x_0 x_1^{(2)} x_2 + x_0^{(2)} x_2^{(2)} +  \\
&3x_0 x_1 x_2^{(2)} + 4x_1^{(2)} x_2^{(2)} - 3x_0 x_2^{(3)} - 6x_1 x_2^{(3)} + 6x_2^{(4)}.
\end{aligned}
\]
Assume we know that its minimal local apolar scheme is supported at $[1:2:-1]$, and it is evinced by a decomposition of $F$. Following the discussion above, we move the support to the affine 0. By \Cref{prop: ChangeCoordInDivPowers}, first we convert it to a standard polynomial, $\Psi(F)$, and then we make the change of variables $\Tilde{\varphi}$, $x_0 \mapsto x_0-2x_1+x_2$, so that it reduces to a quartic in which $x_0^2$ factors out:
\[
(\Tilde{\varphi} \circ \Psi )(F)=x_0^2 \left(\frac{1}{2}x_0x_1-x_1^2+\frac{1}{2}x_1x_2+\frac{1}{4}x_2^2\right).
\]
Substituting back $x_0 \mapsto x_0+2x_1-x_2$ gives
\[
\Psi(F)=(x_0+2x_1-x_2)^2\left(\frac{1}{2}x_0x_1 + \frac{1}{4}x_2\right).
\]
Converting back to divided powers yields the GAD decomposition of $F$.
\end{example}

\begin{remark}
    In summary, when the socle degree of the minimal local apolar scheme $Z$ satisfies $\operatorname{socdeg}(Z) \le d$, the recovery of the GAD proceeds is extremely simple:
    \begin{itemize}
        \item Translate the support $\zeta$ to the origin by an affine change of coordinates, \item Factor out the maximal possible power of $x_0$ from $F$, 
        \item Revert the change of coordinates.
    \end{itemize}The resulting expression gives the desired GAD of $F$.
\end{remark}

\paragraph{\textbf{Socle degree $> d$:}} 
In this case, the minimal local apolar scheme $Z$ is evinced by a polynomial of degree higher than $F$, referred to as an \emph{extension of $F$}.  Recall from \Cref{def:extension} that in this case the minimal local scheme is evinced by a genuine extension $G$ of $F$, with $d'>d$ and $\partial_L^{d'-d} G = F$.

After an affine change of coordinates sending $\zeta$ to the origin, the functional becomes $
\varphi^*(\Lambda) = \mathds{1}_0 \circ g(\partial),
$ 
where $g \in K_{dp}[Y_1,\dots,Y_n]$ represents the dehomogenization of $G$ at $Y_0=1$.

The key observation from \cite[Proposition~4]{BJPR} is that $g$ agrees with $F(Y_0=1)$ \emph{up to degree $d$}: the coefficients in degrees $\le d$ are completely determined by $F$.  
However, for degrees $> d$, then $g$ must contain additional terms that encode the full structure of $R/\ker H_\Lambda$.  
These higher-degree coefficients are determined uniquely by the values of $\Lambda$ on monomials of degree $>d$, which can be computed as
\[
\varphi^*(\Lambda(x^\beta)) = \Lambda(\operatorname{NF}_B((x+\zeta)^\beta)),
\]
where $\operatorname{NF}_B((x+\zeta)^\beta)$ denotes the normal form of $(x+\zeta)^\beta$ in the local algebra $R/\ker H_\Lambda$ with respect to a basis $B$.  

Once all coefficients of $g$ are determined (degrees $\le d$ from $F$, degrees $>d$ from $\Lambda$), the extension $G$ is obtained by homogenizing $g$ back to degree $d'=\operatorname{socdeg}(Z)$.  
Finally, reverting the coordinate change $\varphi$ expresses $G$ in the original coordinates, yielding a GAD that evinces $Z$.

\begin{example}
    We can test this method with \cite[Example 4.6]{GAD}, with a previous change of coordinates.
{\small{
\[
\begin{aligned}
F= &144x_0^{(3)} + 284x_0^{(2)}x_1 + 444x_0x_1^{(2)} + 624x_1^{(3)} + 150x_0^{(2)}x_2 \\
&+ 150x_0x_1x_2 + 150x_1^{(2)}x_2 + 140x_0x_2^{(2)} + 140x_1x_2^{(2)} + 360x_2^{(3)} \\
&+ 428x_0^{(2)}x_3 + 708x_0x_1x_3 + 1028x_1^{(2)}x_3 + 660x_0x_2x_3 + 660x_1x_2x_3 \\
&+ 400x_2^{(2)}x_3 + 1256x_0x_3^{(2)} + 1816x_1x_3^{(2)} + 2040x_2x_3^{(2)} + 3552x_3^{(3)} \\
&+ 216x_0^{(2)}x_4 + 76x_0x_1x_4 + (-84)x_1^{(2)}x_4 + (-30)x_0x_2x_4 + (-30)x_1x_2x_4 \\
&+ (-140)x_2^{(2)}x_4 + 292x_0x_3x_4 + 12x_1x_3x_4 + (-420)x_2x_3x_4 + 184x_3^{(2)}x_4 \\
&+ (-576)x_0x_4^{(2)} + (-436)x_1x_4^{(2)} + (-90)x_2x_4^{(2)} + (-1012)x_3x_4^{(2)} + 936x_4^{(3)} \\
&+ 452x_0^{(2)}x_5 + 872x_0x_1x_5 + 1352x_1^{(2)}x_5 + 450x_0x_2x_5 + 450x_1x_2x_5 \\
&+ 420x_2^{(2)}x_5 + 1324x_0x_3x_5 + 2164x_1x_3x_5 + 1980x_2x_3x_5 + 3848x_3^{(2)}x_5 \\
&+ 628x_0x_4x_5 + 208x_1x_4x_5 + (-90)x_2x_4x_5 + 836x_3x_4x_5 + (-1708)x_4^{(2)}x_5 \\
&+ 1416x_0x_5^{(2)} + 2676x_1x_5^{(2)} + 1350x_2x_5^{(2)} + 4092x_3x_5^{(2)} + 1824x_4x_5^{(2)} + 4428x_5^{(3)}
\end{aligned}
\]
}}
    When we run the algorithm for the we find that for degree 6, which the number of essential variables of $F$ and thus the minimum possible cactus rank of $F$, the only possible complete staircase is $\{1,x_1,\ldots, x_5\}$, for which the matrices $\{M_{x_1}, \ldots , M_{x_5}\}$ commute and have unique eigenvalues. These unique eigenvalues are $1,0,2,-1,3$ respectively.  Hence, the minimal (local) apolar scheme is supported at the affine point $(1,0,2,-1,3)$. Via the methods described above, using the matrices $M_{x_i}$ one finds that the Hilbert of the local defining algebra $R/\ker H_\Lambda$ is $(1,2,1,1,1)$. Hence, the socle deree is $4>\deg F$. 

    \bigskip
    
    We consider every monomial $x^\alpha$ of degree 4, and compute the normal form of $(x+\zeta)^\alpha$ in the basis $\{1,x_1,\ldots,x_5\}$. It turns out that the only nonzero result is 
\[\operatorname{NF}_B((x_2-0)^4)=6\overline{x_5}-18.\]
    Looking at the dehomogenization of $F$, we see that $$\Lambda(x_2^4)=\Lambda(6x_5-18)=6\cdot 452-18\cdot 144=120.$$
    
    Thus, if $\varphi$ is the homogeneous change of coordinates sending $\zeta$ to 0, the scheme is defined locally by $\operatorname{Ann}(g)$, where 
    \[g=((\varphi^*(F))(Y_0=1)+ 120 x_2^{(4)}=
    120x_2^{(4)} + 360x_2^{(3)} + 120x_2^{(2)}x_3 + 20x_1^{(2)} + 140x_2^{(2)} + \]\[ 360x_2x_3 + 120x_3^{(2)} + 120x_2x_4 + 140x_1 + 150x_2 + 140x_3 + 360x_4 + 20x_5 + 144.
\]
    Then 
    $Z$ is evinced by $Y_0^{(0)}\cdot \operatorname{Homog}(g, Y_0, 4)$ (which agrees with \cite[Example 4.6]{GAD}), or in the original system of coordinates by 
    \[G=(Y_0+Y_1+2Y_3-Y_4 + 3Y_5)^{(0)}\cdot (\varphi^{-1})^*)(\operatorname{Homog}(g, Y_0, 4)).\]
\end{example}
In practice, the algorithm proceeds by
\begin{itemize}
    \item Translating the support $\zeta$ to the origin,
    \item Determining the higher-degree terms of the dual generator $g$ from the normal forms in $R/\ker H_\Lambda$,
    \item  Homogenizing $g$ to obtain the extension $G$, 
    \item Reverting the coordinate change to recover the GAD in the original variables.
\end{itemize}

\subsubsection{Global procedure for a minimal local apolar scheme and GAD}\label{Section:GlobalGAD}

The two cases considered above in the previous section whether the socle degree of an apolar scheme of a given $F$ is either $\le d$ or $> d$, can be unified into a single constructive procedure.  
Given a homogeneous polynomial $F \in S_d^*$, the goal is to determine a minimal \emph{local} apolar scheme $Z$ to $F$ and recover the corresponding GAD of $F$ (if $\operatorname{socdeg}(Z) \le d$) or of a genuine extension $G$ of $F$ (if $\operatorname{socdeg}(Z) > d$).

The standard cactus rank algorithm looks for a minimal apolar scheme to $F$, which, in general, may have support at multiple points.  
Here we are interested only in \emph{local} schemes, i.e., zero-dimensional schemes supported at a single point.  
As discussed in the local cactus rank algorithm, the search for a minimal apolar scheme can be constrained to the local setting by incorporating the locality conditions directly into the cactus rank procedure.  
This is achieved by imposing that all multiplication matrices $M_{x_i}$ not only commute but also have a \emph{single eigenvalue}, which coincides with the $i$-th coordinate of the support point.  

Once a minimal local scheme is found, its support $\zeta$ and socle degree $k$ are determined from the multiplication matrices and the Hilbert function.  
The recovery of the GAD then follows the two cases described earlier.

This leads to the following unified algorithm.

\vspace{0.5cm}
\begin{mdframed}[]
\begin{alg}[Minimal local apolar scheme and GAD from $F$]\label{alg:GlobalGAD}
\end{alg}
\vspace{0.1cm}
\noindent\textbf{Input:}  
Homogeneous divided power polynomial $F \in S_d^*$ of degree $d$.

\noindent\textbf{Output:}  
Minimal local apolar scheme $Z = \operatorname{Spec}(R/\ker H_\Lambda)$ to $F$ and a GAD decomposition of $F$ (if $\operatorname{socdeg}(Z) \le d$) or of an extension $G$ of $F$ (if $\operatorname{socdeg}(Z) > d$).

\begin{enumerate}
    \item \textbf{Construct the apolar algebra.}  
    Dehomogenize $F$ as $f = F(Y_0=1)$ and construct the Hankel matrix $H_\Lambda(h)$ with parameters $h_\alpha$ for $|\alpha|>d$.

    \item \textbf{Find the minimal local scheme.}

        \begin{enumerate}
            
        \item\label{alg:local:CHStar} Set $r=\text{rank } H_f^{\ostar}$ (see \Cref{notation: InitialRank}).

    \item\label{Step3} For each admissible Hilbert function $ h = (1,h_1,\dots,h_{d'}) $ of length $ r $, select $B\subseteq R$ a complete staircase of monomials with $|B|=r$ with $h_i$ monomials of degree $i$ and do: 
    \begin{enumerate}
     
        \item\label{alg:localC:h's} Find $h$'s such that:
        \begin{enumerate}
            
            \item $H_{\Lambda(h)}^B$ has nonzero determinant
            \item The multiplication operators $(M_{x_i})^t$ commute for all $i=1,\ldots,n$.
            \item The multiplication operators $M_{x_i}$ have a unique eigenvalue: 
            \[
            \text{char}_{M_{x_i}}(t) = (t - \tfrac{1}{r} \mathrm{tr}(M_{x_i}))^r \]
            
        \end{enumerate}
    
        \item If found, go to \Cref{Alg:Step:SupportAndSocle}. If not, go to \Cref{Step3} with another choice of $B$. If all choices of bases $B$ of length $r$ are performed go to \Cref{alg:localC:increase:r}.
   
    \end{enumerate}
    \item\label{alg:localC:increase:r} Set $r\to r+1$ and go to \Cref{Step3}.
        \end{enumerate}

    This yields a local scheme $Z = \operatorname{Spec}(R/\ker H_\Lambda)$ supported at a single point.

    \item\label{Alg:Step:SupportAndSocle} \textbf{Determine the support and socle degree.}  
    \begin{enumerate}
        \item 
    Compute the support
    \[
    \zeta = \left(\frac{1}{r}\mathrm{tr}(M_{x_1}), \dots, \frac{1}{r}\mathrm{tr}(M_{x_n})\right).
    \]  
\item     Translate $\zeta$ to the origin via a homogeneous coordinate change $\varphi$.  
   \item 
    From the Hilbert function $(1, h_1, \ldots, h_k)$ of $R/\ker H_\Lambda$, determine the socle degree
    \[
    k = \operatorname{socdeg}(R/\ker H_\Lambda),
    \] i.e. the largest $j$ such that $h_j \neq 0$.

    \end{enumerate}
    \item \textbf{Recover the GAD (case distinction based on $k$).}
    \begin{itemize}
        \item \(\mathbf{k \le d}\) (no extension needed):  
        \begin{enumerate}
            \item Write $\varphi^*(F)$ in the form $Y_0^{(d-k)} \cdot H$ by factoring out the maximal possible power of $Y_0$.  
            \item Apply $\varphi^{-1}$ to recover the GAD of $F$ in the original coordinates.
        \end{enumerate}
        
        \item \(\mathbf{k > d}\) (extension needed):  
        \begin{enumerate}
            \item Dehomogenize $\varphi^*(F)$: $f = \varphi^*(F)(Y_0=1)$ (coefficients for degrees $\le d$ of $g$).  
            \item For each $k>|\alpha|>d$, compute $\Lambda(\operatorname{NF}_B((x+\zeta)^\alpha))$ to determine the higher-degree coefficients of $g$.  
            \item Homogenize $g$ to degree $k$ to obtain the extension $G$.  
            \item Apply $\varphi^{-1}$ to recover the GAD of $G$ in the original coordinates.
        \end{enumerate}
    \end{itemize}

    \item \textbf{Output.}  
    Return $Z = \operatorname{Spec}(R/\ker H_\Lambda)$ and the GAD of $F$ (if $k \le d$) or of $G$ (if $k > d$).
\end{enumerate}
\end{mdframed}

\vspace{0.5cm}

\begin{example}\label{Ex:GlobalAlg_k>d}
Consider the homogeneous divided power polynomial
$
F = x_0^{(2)} + x_0x_1 + x_1^{(2)} + x_0x_2 \in S_2^*$.
We apply \Cref{alg:GlobalGAD} to determine the minimal local apolar scheme $Z$ to $F$ and the corresponding GAD.

Setting $Y_0=1$, we obtain $
f = 1 + y_1 + y_1^{(2)} + y_2 \in K_{dp}[y_1,y_2]$. The coefficients of $f$ determine $\Lambda(y^\alpha)$ for all $|\alpha|\le d$.

We construct the Hankel matrix $H_\Lambda(h)$.
Imposing the local condition, we restrict to complete staircases compatible with Hilbert functions of local Gorenstein algebras, and require that all multiplication matrices $M_{x_i}$ commute and have a unique eigenvalue.  
A suitable basis is $
B = \{1, x_1, x_2, x_1^2, x_1x_2, x_2^2, x_1^3\}$,
with Hilbert function
$
H = (1,3,2,1)$ which already shows that the socle degree is $k=3 > d=2$. Thus $Z$ is not evinced by a GAD of $F$, but by a GAD of a degree-$k$ extension $G$ of $F$.

From the eigenvalues of the multiplication matrices, we find $\zeta = (0,0,0)$. No affine translation is needed.

Since $k>d$, we compute the higher degree terms of $g=\varphi^*(\Lambda)$ by evaluating $\Lambda$ on the normal forms of monomials of degree $3$.  
For $y_1^3$, the normal form is $x_1^3\in B$, and we find $\Lambda(x_1^3)=1$.  
All other monomials of degree $3$ reduce to $0$ in $B$.  
Thus
$
g = 1 + y_1 + y_2 + y_1^{(2)} + y_1^{(3)}$.

Homogenizing $g$ to degree $k=3$ gives
$
G = x_0^{(3)} + x_0^{(2)}x_1 + x_0^{(2)}x_2 + x_0x_1^{(2)} + x_1^{(3)}.$

Since $\zeta=0$, the linear form is $L=x_0$.  
Factoring $L^{(k-d)}=x_0^{(1)}$ we obtain
$G = x_0^{(1)} \cdot \left(x_0^{(2)} + x_0x_1 + x_0x_2 + x_1^{(2)} + x_1^{(3)}\right)$.

This is a valid local GAD evincing $Z$.
\end{example}

\subsubsection{Hilbert function via Hankel operators}
In this section, we recover the Hilbert function and its symmetric decomposition directly from a polynomial $F\in R^*$.
This step is essential for determining the socle degree of the apolar algebra $A=R/\ker H_\Lambda$, which in turn governs whether a GAD of $F$ exists or whether a genuine extension is needed. 
Our method, following \cite[Section~1]{BJPR}, exploits the filtrations $\mathfrak{m}^i$ (which corresponds to partial derivatives of $F$ of order at least $i$) and $(0:\mathfrak{m}^i)$ (which corresponds to partial derivatives of degree at most $i$).  
These can be accessed directly via Hankel operators, without explicitly computing a basis or multiplication matrices for $A$. 



\begin{definition}
    \begin{itemize}
        \item \leavevmode Let $\m=(x_1,\ldots, x_n)$ and $p\in R$. The \emph{order} of $p$ is the integer $k\geq 0$ such that $p\in \m^k$ but $p\not \in \m^{k+1}$.
        \item Let $F\in \K_{dp}[Y_1,\ldots, Y_n]$ and $p\in \operatorname{Im}(H_f)$. We call the order of $p$ the maximum order of a $q\in R$ such that $p=q\aprod f$.
    \end{itemize}
     
\end{definition}

Under the isomorphism $R/\ker H_F \cong \operatorname{Im}(H_F)$, the ideal $\m^{i}$ corresponds to partial derivatives of $F$ of order at least $i$, while $(0:\m^{i})$ corresponds to partial derivatives of degree at most $i$. The symmetric decomposition $\Delta_{Q_a}(i)$ counts the partials of $F$ of degree $i$ and order $d-a-i$, modulo partials of smaller degree and higher order.

From a computational viewpoint, the space of partials of order at least $k$ is spanned by $\{H_F(x^\alpha)\}_{|\alpha|\ge k}$, i.e., by the columns of the Hankel matrix indexed by monomials of degree at least $k$. The degree of each partial is read from the corresponding row indexing.


\begin{example}
    We compute the Hilbert function of the apolar algebra $R/\operatorname{Ann}(f)$ for $f=x_1^{(3)}x_2+x_3^{(3)} + x_2^{(2)}$. Let's start by computing its Hankel matrix. Since $f\in R^*$ is 0 on $R_{>4}$, only the matrix indexed by monomials of degree at most $3$ is needed\footnote{The first column corresponds to $1\aprod f=f$, so we can also eliminate it.}.

\[
\resizebox{\textwidth}{!}{$
\begin{array}{c|ccccccccccccccccccc}
                 & x_1 & x_2 & x_3 & x_1^2 & x_1x_2 & x_1x_3 & x_2^2 & x_2x_3 & x_3^2 & x_1^3 & x_1^2x_2 & x_1^2x_3 & x_1x_2^2 & x_1x_2x_3 & x_1x_3^2 & x_2^3 & x_2^2x_3 & x_3^3 \\
\hline 
0               & 0 & 0 & 0 & 0 & 0 & 0 & 1 & 0 & 0 & 0 & 0 & 0 & 0 & 0 & 0 & 0 & 0 & 1 \\
x_1             & 0 & 0 & 0 & 0 & 0 & 0 & 0 & 0 & 0 & 0 & 1 & 0 & 0 & 0 & 0 & 0 & 0 & 0 \\
x_2             & 0 & 1 & 0 & 0 & 0 & 0 & 0 & 0 & 0 & 1 & 0 & 0 & 0 & 0 & 0 & 0 & 0 & 0 \\
x_3             & 0 & 0 & 0 & 0 & 0 & 0 & 0 & 0 & 1 & 0 & 0 & 0 & 0 & 0 & 0 & 0 & 0 & 0 \\
x_1^2           & 0 & 0 & 0 & 0 & 1 & 0 & 0 & 0 & 0 & 0 & 0 & 0 & 0 & 0 & 0 & 0 & 0 & 0 \\
x_1x_2          & 0 & 0 & 0 & 1 & 0 & 0 & 0 & 0 & 0 & 0 & 0 & 0 & 0 & 0 & 0 & 0 & 0 & 0 \\
x_1x_3          & 0 & 0 & 0 & 0 & 0 & 0 & 0 & 0 & 0 & 0 & 0 & 0 & 0 & 0 & 0 & 0 & 0 & 0 \\
x_2^2           & 0 & 0 & 0 & 0 & 0 & 0 & 0 & 0 & 0 & 0 & 0 & 0 & 0 & 0 & 0 & 0 & 0 & 0 \\
x_2x_3          & 0 & 0 & 0 & 0 & 0 & 0 & 0 & 0 & 0 & 0 & 0 & 0 & 0 & 0 & 0 & 0 & 0 & 0 \\
x_3^2           & 0 & 0 & 1 & 0 & 0 & 0 & 0 & 0 & 0 & 0 & 0 & 0 & 0 & 0 & 0 & 0 & 0 & 0 \\
x_1^3           & 0 & 1 & 0 & 0 & 0 & 0 & 0 & 0 & 0 & 0 & 0 & 0 & 0 & 0 & 0 & 0 & 0 & 0 \\
x_1^2x_2        & 1 & 0 & 0 & 0 & 0 & 0 & 0 & 0 & 0 & 0 & 0 & 0 & 0 & 0 & 0 & 0 & 0 & 0 \\
x_1^2x_3        & 0 & 0 & 0 & 0 & 0 & 0 & 0 & 0 & 0 & 0 & 0 & 0 & 0 & 0 & 0 & 0 & 0 & 0 \\
x_1x_2^2        & 0 & 0 & 0 & 0 & 0 & 0 & 0 & 0 & 0 & 0 & 0 & 0 & 0 & 0 & 0 & 0 & 0 & 0 \\
x_1x_2x_3       & 0 & 0 & 0 & 0 & 0 & 0 & 0 & 0 & 0 & 0 & 0 & 0 & 0 & 0 & 0 & 0 & 0 & 0 \\
x_1x_3^2        & 0 & 0 & 0 & 0 & 0 & 0 & 0 & 0 & 0 & 0 & 0 & 0 & 0 & 0 & 0 & 0 & 0 & 0 \\
x_2^3           & 0 & 0 & 0 & 0 & 0 & 0 & 0 & 0 & 0 & 0 & 0 & 0 & 0 & 0 & 0 & 0 & 0 & 0 \\
x_2^2x_3        & 0 & 0 & 0 & 0 & 0 & 0 & 0 & 0 & 0 & 0 & 0 & 0 & 0 & 0 & 0 & 0 & 0 & 0 \\
x_2x_3^2        & 0 & 0 & 0 & 0 & 0 & 0 & 0 & 0 & 0 & 0 & 0 & 0 & 0 & 0 & 0 & 0 & 0 & 0 \\
x_3^3           & 0 & 0 & 0 & 0 & 0 & 0 & 0 & 0 & 0 & 0 & 0 & 0 & 0 & 0 & 0 & 0 & 0 & 0 \\
\end{array}
$}
\]

    The index on the left indicates the degree of the row. 
    
    From this matrix we can easily see the Hilbert function of $R/\operatorname{Ann}(f)$: the space $(0:\m^{3})/(0:\m^2)$ is the space of partials of degree exactly $3$, in this case only the first two columns correspond to partials of degree 3, and since they are linearly independent, $H_A(3)=2$. Now, in order to compute the partials of degree exactly 2, we have to remove the two columns corresponding to degree 3 partials, and we are left with the third, fourth and fifth column, which are linearly independent. Hence, $H_A(2)=3$. Finally, we know that $H_A(1)$ is the number of essential variables of $f$, in this case 3. We reach the same conclusion by eliminating the first 5 columns and looking at the degree 1 partials of the Hankel.  

    For the symmetric decomposition, we take the degree 3 partials and we look at the columns. The two degree 3 partials correspond to columns indexed by degree 1 monomials, so their order is one. Hence, $\Delta_{Q_0}(3)=2$, $\Delta_{Q_1}(3)=\Delta_{Q_2}(3)=0$.  
    The 3 degree 2 partials have order 1,2 and 2. Hence, $\Delta_{Q_0}(2)=2, \; \Delta_{Q_1}(2)=1$. Similarly, $\Delta_{Q_0}(1)=2, \Delta_{Q_1}(1)=1$. We obtain the symmetric decomposition of \Cref{example:table:HFdecomp}.
From this discussion we extract an explicit procedure to compute $H_A$ and its symmetric decomposition.

\begin{table}[h!] 
\centering
\caption{Hilbert function decomposition of $\K[x_1,x_2,x_3]/Ann(f)$} \label{example:table:HFdecomp}
\begin{tabular}{|l|l|l|l|l|l|}
\hline
Degree         & 0 & 1 & 2 & 3 & 4 \\ \hline
$\Delta_{Q_0}$ & 1 & 2 & 2 & 2 & 1 \\ \hline
$\Delta_{Q_1}$ & 0 & 1 & 1 & 0 & 0 \\ \hline
$\Delta_{Q_2}$ & 0 & 0 & 0 & 0 & 0 \\ \hline
Total          & 1 & 3 & 3 & 2 & 1 \\ \hline
\end{tabular}
\end{table}
\end{example}

From this example and the discussion above, we can deduce an algorithm to compute the Hilbert function and its decomposition from any $f\in \K_{dp}[Y_1,\ldots, Y_n]$:

\vspace{0.5cm}
\begin{mdframed}[]
\begin{alg}[Hilbert Function decomposition]\label{alg:SymmetricHF}\end{alg}
\vspace{0.1cm}
\noindent\textbf{Input:} A polynomial $f\in \K_{dp}[Y_1,\ldots, Y_n]$. \\
    \textbf{Output:} The Hilbert function of the associated graded algebra of \\ $\K[x_1,\ldots, x_n]/\operatorname{Ann}(f)$ and its symmetric decomposition.

\begin{enumerate}
    \item Construct the matrix $H_{f}$ indexed by monomials of degree at most $\deg f-1$.
    \item Set $i=\deg f-1$.
    \item\label{alg: item: i} Let $v$ be the vector of columns of $H_F$ with at least one nonzero entry on the rows indexed by monomials of degree $i$.  
    
    \item  For $a$ from $\deg f-1$ to 1 do: 
    \begin{itemize}
        \item Take $w$ a subvector of $v$ of the columns in $H_f$ indexed by monomials of degree $d-i-a$.
        \item Set $\Delta_{Q_a}(i)=\text{rank}(w)$.
        \item Eliminate in $v$ the columns in $w$.
        \item $a\to a-1$.        
        \end{itemize}
        \item Eliminate in $H_f$ the columns in $v$.
        \item $r\to i-1$
        \item Return to step \Cref{alg: item: i}.
        
    \end{enumerate}
\end{mdframed}
\vspace{0.5cm}

\sloppy We can use a reverse engineering strategy to determine a polynomial $f\in \K_{dp}[Y_1,\ldots, Y_n]$ such that $\K[x_1,\ldots, x_n]/\operatorname{Ann}(f)$ has a given Hilbert function. 

\begin{example}
    We can test this with the previous example. We take the Hilbert function decomposition in \Cref{example:table:HFdecomp}, from which we know that the number of variables is 3 and the degree of $f$ is 4. Therefore we consider 
   {\small{ \[
\begin{aligned}
f =\;& a_0 
+ a_1 x_1 + a_2 x_2 + a_3 x_3 \\
&+ a_4 x_1^2 + a_5 x_1 x_2 + a_6 x_1 x_3 + a_7 x_2^2 + a_8 x_2 x_3 + a_9 x_3^2 \\
&+ a_{10} x_1^3 + a_{11} x_1^2 x_2 + a_{12} x_1^2 x_3 + a_{13} x_1 x_2^2 + a_{14} x_1 x_2 x_3 + a_{15} x_1 x_3^2 \\
&+ a_{16} x_2^3 + a_{17} x_2^2 x_3 + a_{18} x_2 x_3^2 + a_{19} x_3^3 \\
&+ a_{20} x_1^4 + a_{21} x_1^3 x_2 + a_{22} x_1^3 x_3 + a_{23} x_1^2 x_2^2 + a_{24} x_1^2 x_2 x_3 + a_{25} x_1^2 x_3^2 \\
&+ a_{26} x_1 x_2^3 + a_{27} x_1 x_2^2 x_3 + a_{28} x_1 x_2 x_3^2 + a_{29} x_1 x_3^3 \\
&+ a_{30} x_2^4 + a_{31} x_2^3 x_3 + a_{32} x_2^2 x_3^2 + a_{33} x_2 x_3^3 + a_{34} x_3^4
\end{aligned}
\]
    }}
 and we build its Hankel operator matrix:
\[\resizebox{\textwidth}{!}{$
\begin{array}{r|cccccccccccccccccccc}
& 1 & x_1 & x_2 & x_3 & x_1^2 & x_1x_2 & x_1x_3 & x_2^2 & x_2x_3 & x_3^2 & x_1^3 & x_1^2x_2 & x_1^2x_3 & x_1x_2^2 & x_1x_2x_3 & x_1x_3^2 & x_2^3 & x_2^2x_3 & x_3^3 \\
\hline 
 & 1 & 2 & 3 & 4 & 5 & 6 & 7 & 8 & 9 & 10 & 11 & 12 & 13 & 14 & 15 & 16 & 17 & 18 & 19 & 20 \\
\hline
1 & a_0   & a_1 & a_2 & a_3 & a_4 & a_5 & a_6 & a_7 & a_8 & a_9 & a_{10} & a_{11} & a_{12} & a_{13} & a_{14} & a_{15} & a_{16} & a_{17} & a_{18} & a_{19} \\
x_1 & a_1 & a_4 & a_5 & a_6 & a_{10} & a_{11} & a_{12} & a_{13} & a_{14} & a_{15} & a_{20} & a_{21} & a_{22} & a_{23} & a_{24} & a_{25} & a_{26} & a_{27} & a_{28} & a_{29} \\
x_2 & a_2 & a_5 & a_7 & a_8 & a_{11} & a_{13} & a_{14} & a_{16} & a_{17} & a_{18} & a_{21} & a_{23} & a_{24} & a_{26} & a_{27} & a_{28} & a_{30} & a_{31} & a_{32} & a_{33} \\
x_3 & a_3 & a_6 & a_8 & a_9 & a_{12} & a_{14} & a_{15} & a_{17} & a_{18} & a_{19} & a_{22} & a_{24} & a_{25} & a_{27} & a_{28} & a_{29} & a_{31} & a_{32} & a_{33} & a_{34} \\
x_1^2 & a_4 & a_{10} & a_{11} & a_{12} & a_{20} & a_{21} & a_{22} & a_{23} & a_{24} & a_{25} & 0 & 0 & 0 & 0 & 0 & 0 & 0 & 0 & 0 & 0 \\
x_1x_2 & a_5 & a_{11} & a_{13} & a_{14} & a_{21} & a_{23} & a_{24} & a_{26} & a_{27} & a_{28} & 0 & 0 & 0 & 0 & 0 & 0 & 0 & 0 & 0 & 0 \\
x_1x_3 & a_6 & a_{12} & a_{14} & a_{15} & a_{22} & a_{24} & a_{25} & a_{27} & a_{28} & a_{29} & 0 & 0 & 0 & 0 & 0 & 0 & 0 & 0 & 0 & 0 \\
x_2^2 & a_7 & a_{13} & a_{16} & a_{17} & a_{23} & a_{26} & a_{27} & a_{30} & a_{31} & a_{32} & 0 & 0 & 0 & 0 & 0 & 0 & 0 & 0 & 0 & 0 \\
x_2x_3 & a_8 & a_{14} & a_{17} & a_{18} & a_{24} & a_{27} & a_{28} & a_{31} & a_{32} & a_{33} & 0 & 0 & 0 & 0 & 0 & 0 & 0 & 0 & 0 & 0 \\
x_3^2 & a_9 & a_{15} & a_{18} & a_{19} & a_{25} & a_{28} & a_{29} & a_{32} & a_{33} & a_{34} & 0 & 0 & 0 & 0 & 0 & 0 & 0 & 0 & 0 & 0 \\
x_1^3 & a_{10} & a_{20} & a_{21} & a_{22} & 0 & 0 & 0 & 0 & 0 & 0 & 0 & 0 & 0 & 0 & 0 & 0 & 0 & 0 & 0 & 0 \\
x_1^2x_2 & a_{11} & a_{21} & a_{23} & a_{24} & 0 & 0 & 0 & 0 & 0 & 0 & 0 & 0 & 0 & 0 & 0 & 0 & 0 & 0 & 0 & 0 \\
x_1^2x_3 & a_{12} & a_{22} & a_{24} & a_{25} & 0 & 0 & 0 & 0 & 0 & 0 & 0 & 0 & 0 & 0 & 0 & 0 & 0 & 0 & 0 & 0 \\
x_1x_2^2 & a_{13} & a_{23} & a_{26} & a_{27} & 0 & 0 & 0 & 0 & 0 & 0 & 0 & 0 & 0 & 0 & 0 & 0 & 0 & 0 & 0 & 0 \\
x_1x_2x_3 & a_{14} & a_{24} & a_{27} & a_{28} & 0 & 0 & 0 & 0 & 0 & 0 & 0 & 0 & 0 & 0 & 0 & 0 & 0 & 0 & 0 & 0 \\
x_1x_3^2 & a_{15} & a_{25} & a_{28} & a_{29} & 0 & 0 & 0 & 0 & 0 & 0 & 0 & 0 & 0 & 0 & 0 & 0 & 0 & 0 & 0 & 0 \\
x_2^3 & a_{16} & a_{26} & a_{30} & a_{31} & 0 & 0 & 0 & 0 & 0 & 0 & 0 & 0 & 0 & 0 & 0 & 0 & 0 & 0 & 0 & 0 \\
x_2^2x_3 & a_{17} & a_{27} & a_{31} & a_{32} & 0 & 0 & 0 & 0 & 0 & 0 & 0 & 0 & 0 & 0 & 0 & 0 & 0 & 0 & 0 & 0 \\
x_2x_3^2 & a_{18} & a_{28} & a_{32} & a_{33} & 0 & 0 & 0 & 0 & 0 & 0 & 0 & 0 & 0 & 0 & 0 & 0 & 0 & 0 & 0 & 0 \\
x_3^3 & a_{19} & a_{29} & a_{33} & a_{34} & 0 & 0 & 0 & 0 & 0 & 0 & 0 & 0 & 0 & 0 & 0 & 0 & 0 & 0 & 0 & 0 \\
\end{array}
$}\]

Imposing that $\Delta_{Q_0}(3)=2$ we have that the matrix 
{\small{
\[
\begin{array}{r|cccc}
& 1 & x_1 & x_2 & x_3\\
\hline 
1 & 0   & a_1 & a_2 & a_3\\
x_1 & a_1 & a_4 & a_5 & a_6 \\
x_2 & a_2 & a_5 & a_7 & a_8 \\
x_3 & a_3 & a_6 & a_8 & a_9 \\
x_1^2 & a_4 & a_{10} & a_{11} & a_{12} \\
x_1x_2 & a_5 & a_{11} & a_{13} & a_{14}\\
x_1x_3 & a_6 & a_{12} & a_{14} & a_{15}\\
x_2^2 & a_7 & a_{13} & a_{16} & a_{17} \\
x_2x_3 & a_8 & a_{14} & a_{17} & a_{18}\\
x_3^2 & a_9 & a_{15} & a_{18} & a_{19}\\
x_1^3 & a_{10} & a_{20} & a_{21} & a_{22}\\
x_1^2x_2 & a_{11} & a_{21} & a_{23} & a_{24}\\
x_1^2x_3 & a_{12} & a_{22} & a_{24} & a_{25}\\
x_1x_2^2 & a_{13} & a_{23} & a_{26} & a_{27}\\
x_1x_2x_3 & a_{14} & a_{24} & a_{27} & a_{28}\\
x_1x_3^2 & a_{15} & a_{25} & a_{28} & a_{29} \\
x_2^3 & a_{16} & a_{26} & a_{30} & a_{31} \\
x_2^2x_3 & a_{17} & a_{27} & a_{31} & a_{32}\\
x_2x_3^2 & a_{18} & a_{28} & a_{32} & a_{33} \\
x_3^3 & a_{19} & a_{29} & a_{33} & a_{34} \\
\end{array}\]
}}
has two columns corresponding to degree 3 polynomials that are linearly independent. A possibility is $a_{21}=1$, $a_7=1$, and the rest of the coefficients 0 of the first two columns and the degree 3 part of the last column equal to zero. Note that so far $f$ is determined by coefficients satisfying this choice. We could continue with this process until we recover the coefficients of $f$ (with possibly other solutions having the same Hilbert function).

\end{example}

\bibliographystyle{alphaurl}
\bibliography{bibliography}

\newcommand{\etalchar}[1]{$^{#1}$}
\begin{thebibliography}{BCMT10}

\bibitem[BBM14]{MR3250539}
A.~Bernardi, J.~Brachat, and B.~Mourrain.
\newblock A comparison of different notions of ranks of symmetric tensors.
\newblock {\em Linear Algebra Appl.}, 460:205--230, 2014.
\newblock \href {https://doi.org/10.1016/j.laa.2014.07.036} {\path{doi:10.1016/j.laa.2014.07.036}}.

\bibitem[BCMT10]{Bernard}
J.~Brachat, P.~Comon, B.~Mourrain, and E.~Tsigaridas.
\newblock Symmetric tensor decomposition.
\newblock {\em Linear Algebra and its Applications}, 433(11--12):1851--1872, 2010.
\newblock \href {https://doi.org/10.1016/j.laa.2010.06.046} {\path{doi:10.1016/j.laa.2010.06.046}}.

\bibitem[BGI11]{BERNARDI201134}
A.~Bernardi, A.~Gimigliano, and M.~Id{\`a}.
\newblock Computing symmetric rank for symmetric tensors.
\newblock {\em Journal of Symbolic Computation}, 46(1):34--53, 2011.
\newblock URL: \url{https://www.sciencedirect.com/science/article/pii/S0747717110001240}, \href {https://doi.org/10.1016/j.jsc.2010.08.001} {\path{doi:10.1016/j.jsc.2010.08.001}}.

\bibitem[BJM{\etalchar{+}}18]{BJPR}
A.~Bernardi, J.~Jelisiejew, P.~M. Marques, et~al.
\newblock On polynomials with given hilbert function and applications.
\newblock {\em Collectanea Mathematica}, 69:39--64, 2018.
\newblock \href {https://doi.org/10.1007/s13348-016-0190-2} {\path{doi:10.1007/s13348-016-0190-2}}.

\bibitem[BOT24]{GAD}
A.~Bernardi, A.~Oneto, and D.~Taufer.
\newblock On schemes evinced by generalized additive decompositions and their regularity.
\newblock {\em Journal de Math{\'e}matiques Pures et Appliqu{\'e}es}, 188:446--469, 2024.
\newblock \href {https://doi.org/10.1016/j.matpur.2024.06.007} {\path{doi:10.1016/j.matpur.2024.06.007}}.

\bibitem[Bou98]{Bourbaki}
N.~Bourbaki.
\newblock {\em Algebra I: Chapters 1-3}.
\newblock Actualit{\'e}s scientifiques et industrielles. Springer, 1998.
\newblock URL: \url{https://books.google.it/books?id=STS9aZ6F204C}.

\bibitem[BR13]{BR13}
A.~Bernardi and K.~Ranestad.
\newblock On the cactus rank of cubics forms.
\newblock {\em Journal of Symbolic Computation}, 50:291--297, 2013.

\bibitem[BT18]{Alessandra}
A.~Bernardi and D.~Taufer.
\newblock Waring, tangential and cactus decompositions.
\newblock {\em Journal de Math{\'e}matiques Pures et Appliqu{\'e}es}, 2018.

\bibitem[EM96]{BernardBook}
M.~Elkadi and B.~Mourrain.
\newblock {\em Introduction {\`a} la r{\'e}solution des syst{\`e}mes polynomiaux}.
\newblock Springer-Verlag Berlin Heidelberg, 1996.

\bibitem[Iar94]{Iarrobino}
A.~Iarrobino.
\newblock Associated graded algebra of a gorenstein artin algebra.
\newblock {\em Memoirs of the American Mathematical Society}, 107(514), 1994.

\bibitem[IK99]{IK}
A.~Iarrobino and V.~Kanev.
\newblock {\em Power Sums, Gorenstein Algebras and Determinantal Loci}, volume 1721 of {\em Lecture Notes in Mathematics}.
\newblock Springer-Verlag, 1999.

\bibitem[Jel22]{Joachim}
J.~Jelisiejew.
\newblock Hilbert schemes of points and applications.
\newblock 2022.
\newblock arXiv preprint.

\bibitem[Lan17]{MR3729273}
J.~M. Landsberg.
\newblock {\em Geometry and complexity theory}, volume 169 of {\em Cambridge Studies in Advanced Mathematics}.
\newblock Cambridge University Press, Cambridge, 2017.
\newblock \href {https://doi.org/10.1017/9781108183192} {\path{doi:10.1017/9781108183192}}.

\bibitem[Mou99]{NormalForm}
Bernard Mourrain.
\newblock A new criterion for normal form algorithms.
\newblock In {\em Proceedings of the 13th International Symposium on Applied Algebra, Algebraic Algorithms and Error-Correcting Codes}, AAECC-13, page 430–443, Berlin, Heidelberg, 1999. Springer-Verlag.

\bibitem[Mou18]{Polyexp}
B.~Mourrain.
\newblock Polynomial--exponential decomposition from moments.
\newblock {\em Foundations of Computational Mathematics}, pages 1435--1492, 2018.

\end{thebibliography}
\end{document}